\newtheorem{thm}{Theorem}[section]
\newtheorem{lem}[thm]{Lemma}
\theoremstyle{definition}
\theoremstyle{remark}
\newtheorem{rem}[thm]{Remark}
\numberwithin{equation}{section}
\DeclareMathOperator{\RE}{Re}
\newcommand{\set}[1]{\left\{#1\right\}}
\begin{document}

\title[Sufficient Conditions for Starlike  Functions]{Sufficient Conditions for Starlike  Functions Associated with the Lemniscate of Bernoulli}
\author{S. Sivaprasad Kumar}
\address{Department of Applied Mathematics, Delhi Technological University, Delhi-110042, India}
\email{spkumar@dce.ac.in}
\author{Virendra Kumar}
\address{Department of Applied Mathematics, Delhi Technological University, Delhi-110042, India}
\email{vktmaths@yahoo.in}
\author{ V. Ravichandran}
\address{Department of Mathematics, University of Delhi, Delhi---110007, India}
\email{vravi@maths.du.ac.in}

\author{N. E. Cho}
\address{Department of Applied Mathematics, Pukyong National University, Busan 608-737}
\email{necho@pknu.ac.kr}

\subjclass[2010]{30C80, 30C45}%
\keywords{Starlike function, lemniscate of Bernoulli, subordination}

 \date{}
 \dedicatory{Dedicated to Prof. Hari M. Srivastava}
\begin{abstract}Let $-1\leq B<A\leq 1$.  Condition on $\beta$, is determined so that $1+\beta zp'(z)/p^k(z)\prec(1+Az)/(1+Bz)\;(-1<k\leq3)$ implies $p(z)\prec \sqrt{1+z}$. Similarly, condition on $\beta$ is determined  so that $1+\beta zp'(z)/p^n(z)$ or $p(z)+\beta zp'(z)/p^n(z)\prec\sqrt{1+z}\;(n=0, 1, 2)$ implies $p(z)\prec(1+Az)/(1+Bz)$ or $\sqrt{1+z}$. In addition to that condition on $\beta$ is derived so that $p(z)\prec(1+Az)/(1+Bz)$ when $p(z)+\beta zp'(z)/p(z)\prec\sqrt{1+z}.$ Few more problems of the similar flavor are also considered.
\end{abstract}

\maketitle

\section{Introduction}
Let $\mathcal{A}$ be the class of analytic functions defined on the unit disk $\mathbb{D}:=\{z\in\mathbb{C}:|z|<1\}$ normalized by the condition $f(0)=0=f'(0)-1.$ For two analytic functions $f$ and $g$, we say that $f$ is
\emph{subordinate} to $g$ or $g$ \emph{superordinate} to $f$, denoted by $f\prec g$, if
there is a Schwarz function $w$ with $|w(z)|\leq |z|$ such that
$f(z)=g(w(z))$. If $g$ is univalent, then $f\prec g$ if and only if
$f(0)=g(0)$ and $f(\mathbb{D})\subseteq g(\mathbb{D})$. For an analytic function $\varphi$ whose range is starlike with respect to $\varphi(0)=1$ and is symmetric with respect to the real axis, let
$\mathcal{S}^*(\varphi)$ denote the class of \emph{Ma-Minda starlike functions} consisting of all $f\in\mathcal{A}$ satisfying $zf'(z)/f(z)\prec \varphi(z)$.
For special choices of $\varphi$, $\mathcal{S}^*(\varphi)$  reduce to well-known subclasses of starlike functions. For example, when   $-1\leq B< A\leq 1$, $\mathcal{S}^*[A,B]:=\mathcal{S}^*((1+Az)/(1+Bz))$ is the Janowski starlike functions \cite{jano} (see \cite{yasar}) and $\mathcal{S}^*[1-2\alpha, -1]$ is the class  $\mathcal{S}^*(\alpha)$ of  starlike
functions of order $\alpha$ and  $\mathcal {S}^*:=\mathcal{ S}^*(0)$ is  the class of starlike functions.
For $\varphi(z):=\sqrt{1+z}$, the class $\mathcal{S}^*(\varphi)$ reduces to the class  $\mathcal{SL}$ introduced by
Sok\'o\l\ and Stankiewicz \cite{sokol96} and studied recently by Ali\ et al. \cite{lemi, lemi1}.
A function $f\in\mathcal{A}$ is in the class $\mathcal{SL}$ if $zf'(z)/f(z)$ lies in the region bounded by the right half plane
of the lemniscate of Bernoulli given by $|w^2-1|<1$. Analytically,
$\mathcal{SL}:=\set{f\in\mathcal{A}:\left|\left(zf'(z)/f(z)\right)^2-1\right|<1}.$
For $b\geq1/2$ and $a\geq1$, a more general class $S^*[a,b]$ of the functions $f$ satisfying $|(zf'(z)/f(z))^a-b|<b$ was considered by Paprocki and Sok\'o\l\ \cite{sokol}. Clearly $S^*[2,1]=:\mathcal{SL}.$
For some radius problems related with lemniscate of Bernoulli see \cite{lemi2,lemi1,sokol09,sokol96}. Estimates for the initial coefficients of functions in the class $\mathcal{SL}$ is available in \cite{sokol09}.

Let $p$ be an analytic function defined on $\mathbb{D}$ with $p(0)=1$. Recently Ali et al. \cite{lemi} determined conditions for $p(z)\prec \sqrt{1+z}$ when  $1+\beta zp'(z)/p^k(z) $ with $k=0,1,2$ or $(1-\beta)p(z)+\beta p^2(z)+\beta zp'(z)$ is subordinated to $\sqrt{1+z}$.
Motivated by the works in \cite{lemi,lemi2, lemi1,sokol,sokol09}, in Section~\ref{2} condition on $\beta$ is determined so that  $p(z)\prec \sqrt{1+z}$ when
$1+\beta zp'(z)/p^k(z)\prec(1+Az)/(1+Bz),\; (-1<k\leq3)$. Similarly, condition  on $\beta$ is determined so that $p(z)\prec(1+Az)/(1+Bz)$ when $1+\beta zp'(z)/p^n(z)\prec\sqrt{1+z}$, n=0, 1, 2. Further condition on $\beta$ is obtained in each case so that $p(z)\prec\sqrt{1+z}$ when $p(z)+\beta zp'(z)/p^n(z),\;n=0, 1, 2.$ At the end of this section problem $p(z)+\beta zp'(z)/p(z)\prec\sqrt{1+z}$ implies $p(z)\prec (1+Az)/(1+Bz)$ is also considered.

 Silveramn \cite{silver} introduced the class $\mathcal{G}_b$ by
  $$\mathcal{G}_b:=\set{f\in \mathcal{A}:\left|\frac{zf''(z)/f'(z)}{zf'(z)/f(z)}-1\right|<b}$$ and proved $\mathcal{G}_b\subset\mathcal{S}^*(2/(1+\sqrt{1+8b}), 0<b\leq1$. Further this result was improved by Obradovi$\check{c}$ and Tuneski \cite{obra} by showing $\mathcal{G}_b\subset S^*[0, b]\subset\mathcal{S}^*(2/(1+\sqrt{1+8b}), 0<b\leq1.$ Tuneski \cite{tuneski1} further obtained the condition for $\mathcal{G}_b\subset S^*[A, B].$ Inspired by the work of Silverman \cite{silver}, Nunokawa et al. \cite{nuno}, obtained the, sufficient conditions for function in the class $\mathcal{G}_b$ to be strongly starlike, strongly convex, or starlike in $\mathbb{D}$.  By setting $p(z)=zf'(z)/f(z)$, the inclusion $\mathcal{G}_b\subset S^*[A, B]$ can be written as $$1+\frac{zp'(z)}{p^2(z)}\prec 1+b z\Longrightarrow p(z)\prec \frac{1+Az}{1+Bz}.$$ Recently Ali et al.\cite{ali}, obtained condition on the constants $A, B, D, E\in [-1, 1]$ and $\beta$ so that $p(z)\prec(1+Az)/(1+Bz)$ when $1+\beta zp'(z)/p^n(z)\prec (1+Dz)/(1+Ez),\; n=0, 1.$ In Section~\ref{3}, alternate and easy proof of results \cite[Lemma 2.1, 2.10]{ali} are discussed. Further this section has been concluded with condition on $A, B, D, E\in [-1, 1]$ and $\beta$ such that $1+\beta zp'(z)/p^2(z)\prec (1+Dz)/(1+Ez)$ implies $p(z)\prec(1+Az)/(1+Bz).$

The following results are required in order to prove our main results:
\begin{lem}\cite[Corollary 3.4h, p.135]{miller}\label{miller} Let $q$  be univalent in $\mathbb{D}$, and let $\varphi$ be analytic in a domain $D$ containing $q(\mathbb{D})$. Let $zq'(z)\varphi(q(z))$ be starlike. If $p$ is analytic in $\mathbb{D},$ $p(0)=q(0)$ and satisfies  $$zp'(z)\varphi(p(z))\prec zq'(z)\varphi(q(z)),$$ then $p\prec q$ and $q$ is the best dominant.
\end{lem}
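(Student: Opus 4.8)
The plan is to prove this by contradiction, using the standard machinery of first-order differential subordinations. Write $h(z):=zq'(z)\varphi(q(z))$; by hypothesis $h$ is starlike, so in particular $h(0)=0$ and $h(\mathbb{D})$ is a domain starlike with respect to the origin, and $p$ satisfies $zp'(z)\varphi(p(z))\prec h(z)$ together with $p(0)=q(0)$. Suppose, for contradiction, that $p\not\prec q$. Since $q$ is univalent and $q(0)=p(0)$, the Miller--Mocanu intersection-points lemma furnishes points $z_0\in\mathbb{D}$, $\zeta_0\in\partial\mathbb{D}$, and a real number $m\geq 1$ such that $p(z_0)=q(\zeta_0)$, $p(\{|z|<|z_0|\})\subseteq q(\mathbb{D})$, and $z_0p'(z_0)=m\,\zeta_0 q'(\zeta_0)$.

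The next step is to evaluate the subordinating expression at $z_0$. Since $\varphi$ is analytic on a domain containing $q(\mathbb{D})$ and $p(z_0)=q(\zeta_0)$ lies where $\varphi$ is defined, we obtain
$$z_0p'(z_0)\varphi(p(z_0))=m\,\zeta_0 q'(\zeta_0)\varphi(q(\zeta_0))=m\,h(\zeta_0).$$
Now I would invoke the geometry of starlike domains: because $h$ is starlike with $h(0)=0$ and $\zeta_0\in\partial\mathbb{D}$, the value $h(\zeta_0)$ is a boundary value of the starlike domain $h(\mathbb{D})$, and for $m\geq 1$ the point $m\,h(\zeta_0)$ lies on the ray from $0$ through $h(\zeta_0)$ at or beyond that boundary point, hence $m\,h(\zeta_0)\notin h(\mathbb{D})$. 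On the other hand, the subordination $zp'(z)\varphi(p(z))\prec h(z)$ forces $z_0p'(z_0)\varphi(p(z_0))\in h(\mathbb{D})$. This contradiction establishes $p\prec q$.

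For the best-dominant assertion, observe that $q$ itself solves $zq'(z)\varphi(q(z))=h(z)$, so $p=q$ satisfies all the hypotheses; consequently, if $\widetilde q$ is any univalent dominant valid for every admissible $p$, then in particular $q\prec\widetilde q$, so $q$ is the best dominant. The main obstacle in making this fully rigorous is the boundary-value claim for the starlike function $h$: one must justify that $h(\zeta_0)$ is a genuine boundary point of $h(\mathbb{D})$, which relies on the univalence of $q$ and a standard continuity/limit argument (passing from $q$ to the dilations $q_\rho(z)=q(\rho z)$ when $q$ is not a priori analytic on $\overline{\mathbb{D}}$). Everything else is a direct application of the intersection-points lemma and the definition of subordination.
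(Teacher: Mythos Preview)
The paper does not prove this lemma at all: it is quoted verbatim as Corollary~3.4h from Miller and Mocanu's monograph and used as a black box throughout. So there is no ``paper's own proof'' to compare your attempt against.

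That said, your sketch is the standard admissibility/contradiction argument underlying the Miller--Mocanu theory, and the outline is correct. Two points deserve a little more care. First, at the contact point you write $p(z_0)=q(\zeta_0)$ and then evaluate $\varphi(q(\zeta_0))$; but the hypothesis only guarantees $\varphi$ is analytic on a domain $D\supseteq q(\mathbb{D})$, not on $\overline{q(\mathbb{D})}$, so $q(\zeta_0)\in\partial q(\mathbb{D})$ need not lie in $D$. You flag this implicitly with the dilation remark, but it should be stated up front: one replaces $q$ by $q_\rho(z)=q(\rho z)$ with $\rho<1$ (so that $q_\rho(\overline{\mathbb{D}})\subset q(\mathbb{D})\subset D$ and $h_\rho$ is still starlike), runs the argument, and lets $\rho\to1^-$. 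Second, the claim that $m\,h(\zeta_0)\notin h(\mathbb{D})$ for $m\geq1$ uses more than starlikeness of the image: you need that the open segment from $0$ to $h(\zeta_0)$ lies in $h(\mathbb{D})$ while $h(\zeta_0)$ itself is on the boundary, which follows from the univalence of the starlike function $h$ (again made rigorous via dilation). With those two technicalities handled, your argument is exactly how Miller and Mocanu establish the result in their book.
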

The following is a more general form of the above lemma:
\begin{lem} \cite[Corollary 3.4i, p.134]{miller}\label{miller2} Let $q$  be univalent in $\mathbb{D}$, and $\varphi$ and $\nu$ be analytic in a domain $D$ containing $q(\mathbb{D})$ with $\varphi(w)\neq0$ when $w\in q(\mathbb{D})$. Set
\[Q(z):=zq'(z)\varphi(q(z)),\quad h(z):=\nu(q(z))+Q(z).\]
Suppose that
\begin{enumerate}
\item  $h$ is convex or $Q(z)$ is starlike univalent in $\mathbb{D}$
 and \item
${\RE } \left(\frac{zh'(z)}{Q(z)}\right)>0$ for  $z\in \mathbb{D}.$
\end{enumerate}
If
 \begin{equation} \label{e1}
    \nu(p(z))+zp'(z)\varphi(p(z))\prec
    \nu(q(z))+zq'(z)\varphi(q(z)),
\end{equation} then $p\prec q$ and $q$ is the best dominant.
\end{lem}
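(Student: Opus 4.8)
The plan is to treat this as a first-order differential subordination and run the admissibility machinery underlying Miller and Mocanu's theory \cite{miller}. Write $\psi(r,s):=\nu(r)+s\,\varphi(r)$, which is analytic on $D\times\mathbb{C}$ since $\nu$ and $\varphi$ are analytic on $D$; then the hypothesis \eqref{e1} reads $\psi\bigl(p(z),zp'(z)\bigr)\prec\psi\bigl(q(z),zq'(z)\bigr)=h(z)$, and the goal is $p\prec q$. Note that $q$ itself solves $\nu(q(z))+zq'(z)\varphi(q(z))=h(z)$, so once the subordination $p\prec q$ is established for every admissible $p$, the claim that $q$ is the best dominant is immediate: $q$ is one of the functions $p$ satisfying \eqref{e1} (with equality), hence every dominant $\tilde q$ of all such $p$ satisfies $q\prec\tilde q$.

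To prove $p\prec q$ I would argue by contradiction using the Jack/Clunie-type selection lemma of Miller and Mocanu: if $p\not\prec q$ then, after the customary reduction replacing $q$ by $q_\rho(z):=q(\rho z)$ with $\rho<1$ to handle possibly bad boundary behaviour, there exist $z_0\in\mathbb{D}$, $\zeta_0\in\partial\mathbb{D}$ and $m\geq 1$ with $p(z_0)=q(\zeta_0)$ and $z_0p'(z_0)=m\,\zeta_0 q'(\zeta_0)$. Substituting into $\psi$ gives
\[
\psi\bigl(p(z_0),z_0p'(z_0)\bigr)=\nu\bigl(q(\zeta_0)\bigr)+m\,\zeta_0 q'(\zeta_0)\,\varphi\bigl(q(\zeta_0)\bigr)=h(\zeta_0)+(m-1)Q(\zeta_0),
\]
where $Q(\zeta_0)\neq 0$ because $\varphi$ is non-vanishing on $q(\mathbb{D})$ and $q$ is univalent. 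It then suffices to show this value lies outside $h(\mathbb{D})$, which contradicts $\psi(p(z),zp'(z))\prec h(z)$.

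The crux is therefore the geometric statement: for every $\zeta_0\in\partial\mathbb{D}$ and every $t\geq 0$, the point $h(\zeta_0)+t\,Q(\zeta_0)$ does not lie in $h(\mathbb{D})$. Hypothesis (2), $\RE\bigl(zh'(z)/Q(z)\bigr)>0$, is the infinitesimal form of this: on the boundary the tangent vector to $\theta\mapsto h(e^{i\theta})$ at $h(\zeta_0)$ equals $i\zeta_0 h'(\zeta_0)=i\,Q(\zeta_0)\,\tau$ with $\RE\tau>0$, so $Q(\zeta_0)$ has strictly positive component along the outward normal to $\partial h(\mathbb{D})$. Promoting this from an infinitesimal to a global statement about the whole ray is exactly where hypothesis (1) is used: if $h$ is convex, a supporting-line argument keeps $h(\zeta_0)+t\,Q(\zeta_0)$ outside the convex domain $h(\mathbb{D})$ for all $t\geq 0$; if instead $Q$ is starlike univalent, condition (2) says $h$ is close-to-convex with respect to $Q$, hence univalent, and an argument-principle / winding-number count shows the ray cannot re-enter $h(\mathbb{D})$. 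I expect this promotion, together with the routine but delicate $q_\rho$-approximation needed because $q$, $h$, $Q$ need not extend continuously to $\partial\mathbb{D}$, to be the main obstacle; the algebra involving $\psi$, $Q$ and $h$ is straightforward by comparison.
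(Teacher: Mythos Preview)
The paper does not prove this lemma at all; it is quoted verbatim as \cite[Corollary 3.4i, p.~134]{miller} and used only as a black box in the proof of Lemma~2.8. So there is no ``paper's own proof'' to compare against.

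That said, your sketch is a faithful outline of the standard Miller--Mocanu argument for results of this type: reformulate the hypothesis as $\psi(p,zp')\prec h$, invoke the touching-point lemma to produce $z_0,\zeta_0,m$ with $p(z_0)=q(\zeta_0)$ and $z_0p'(z_0)=m\zeta_0 q'(\zeta_0)$, and then show $h(\zeta_0)+(m-1)Q(\zeta_0)\notin h(\mathbb{D})$ via the geometric conditions (1) and (2). Your identification of the real work---promoting the infinitesimal outward-normal condition $\RE(zh'/Q)>0$ to the global statement that the ray $t\mapsto h(\zeta_0)+tQ(\zeta_0)$ stays outside $h(\mathbb{D})$, with the two alternatives in (1) handling this differently---is exactly right, as is your remark that the $q_\rho$ approximation is needed to make boundary values meaningful. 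If you want a reference for the details, this is precisely how the proof runs in \cite[Theorem 3.4h and Corollary 3.4i]{miller}.
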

\begin{lem}\cite[Corollary 3.4a, p.120]{miller}\label{miller1} Let $q$ be analytic in $\mathbb{D}$, let $\phi$ be analytic in a domain $D$ containing $q(\mathbb{D})$ and suppose
\begin{enumerate}
  \item $\RE \phi[q(z)]>0$
  and either
  \item  $q$ is convex, or
  \item $Q(z)=zq'(z).\phi[q(z)]$ is starlike.
\end{enumerate}
If $p $ is analytic in $\mathbb{D}$, with $p(0)=q(0),$ $p(\mathbb{D})\subset D$ and
\[ p(z)+zp'(z)\phi[p(z)]\prec q(z),\]
then $p(z)\prec q(z).$
\end{lem}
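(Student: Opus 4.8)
The plan is to prove Lemma~\ref{miller1} by contradiction, reducing it to a single admissibility estimate on the boundary of $q(\mathbb{D})$ and then invoking the Miller--Mocanu form of Jack's lemma. In either of the cases (2), (3) the function $q$ is univalent (convexity gives univalence, and under (3) it is part of the standing hypotheses, a starlike $Q$ forcing in particular $q'\ne0$), so $q(\mathbb{D})$ is a domain and $p\prec q$ is equivalent to $p(0)=q(0)$ together with $p(\mathbb{D})\subseteq q(\mathbb{D})$. After the customary reductions — replacing $p(z)$ by $p(rz)$ and, if needed, $q(z)$ by $q(\rho z)$ and letting $r,\rho\to1$ — we may assume everything is analytic on $\overline{\mathbb{D}}$, so that $\phi[q(\zeta_0)]$ makes sense for $|\zeta_0|=1$. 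Write $\psi(a,b):=a+b\,\phi(a)$; the hypothesis (with $p(\mathbb{D})\subset D$ ensuring that $\phi[p(z)]$ is defined) says that $\psi\bigl(p(z),zp'(z)\bigr)\in q(\mathbb{D})$ for all $z\in\mathbb{D}$.

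Suppose, to force a contradiction, that $p\not\prec q$. Since $p(0)=q(0)$ and $q$ is univalent, the subordination version of Jack's lemma produces points $z_0\in\mathbb{D}$, $\zeta_0\in\partial\mathbb{D}$ and a real number $m\ge1$ with $p(z_0)=q(\zeta_0)$ and $z_0p'(z_0)=m\,\zeta_0q'(\zeta_0)$, whence
\[
\psi\bigl(p(z_0),z_0p'(z_0)\bigr)=q(\zeta_0)+m\,\zeta_0q'(\zeta_0)\,\phi[q(\zeta_0)].
\]
By hypothesis the left-hand side lies in the open set $q(\mathbb{D})$, so the proof will be complete once we show that the right-hand side does \emph{not}; equivalently, that $\psi$ is an admissible function for the pair $\bigl(q(\mathbb{D}),q\bigr)$ in the Miller--Mocanu sense.

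This boundary estimate is the crux of the argument, and the step I expect to be the main obstacle. In case (2), $q(\mathbb{D})$ is convex and $\zeta_0q'(\zeta_0)$ points along the outward normal to $\partial q(\mathbb{D})$ at $q(\zeta_0)$, the tangent direction being $i\zeta_0q'(\zeta_0)$, obtained by differentiating $\theta\mapsto q(e^{i\theta})$. Setting $\mu:=m\,\phi[q(\zeta_0)]$, hypothesis~(1) together with $m\ge1$ gives $\RE\mu=m\,\RE\phi[q(\zeta_0)]\ge0$; decomposing $\mu\,\zeta_0q'(\zeta_0)=(\RE\mu)\,\zeta_0q'(\zeta_0)+(\Im\mu)\,i\zeta_0q'(\zeta_0)$ shows that $q(\zeta_0)+\mu\,\zeta_0q'(\zeta_0)$ leaves $q(\zeta_0)$ with non-negative normal component, hence lands on or beyond the supporting line of the convex set $q(\mathbb{D})$ at $q(\zeta_0)$ and so is not in the open domain $q(\mathbb{D})$. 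Case (3) is handled by the same admissibility criterion, but now exploiting the starlikeness of $Q(z)=zq'(z)\phi[q(z)]$ in tandem with (1) to locate $q(\zeta_0)+mQ(\zeta_0)$ outside $q(\mathbb{D})$; this is the delicate half, where the full strength of the Miller--Mocanu theory of admissible functions is used. In either case the required estimate delivers the contradiction, and Lemma~\ref{miller1} follows. I do not see how to deduce it formally from Lemma~\ref{miller} or Lemma~\ref{miller2}, since in those the dominant side of the subordination carries a genuine $zq'(z)\varphi(q(z))$ term whereas here it is $q$ itself, so a direct argument of this kind seems unavoidable.
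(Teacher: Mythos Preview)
The paper does not prove this lemma at all: it is quoted, with explicit citation to \cite[Corollary~3.4a, p.~120]{miller}, as one of three preliminary subordination tools and is then merely \emph{applied} in Section~\ref{2}. There is therefore no in-paper proof for your proposal to be compared against.

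On the merits of your outline: the strategy---assume $p\not\prec q$, invoke the Miller--Mocanu/Jack boundary lemma to produce $z_0,\zeta_0,m\ge1$ with $p(z_0)=q(\zeta_0)$ and $z_0p'(z_0)=m\zeta_0q'(\zeta_0)$, and then argue that $q(\zeta_0)+m\zeta_0q'(\zeta_0)\phi[q(\zeta_0)]\notin q(\mathbb{D})$---is indeed how the result is established in the cited monograph, and your treatment of the convex case~(2) via the outward-normal decomposition is correct. Case~(3), however, you do not actually carry out: saying that ``the full strength of the Miller--Mocanu theory of admissible functions is used'' is not a proof, and the mechanism by which starlikeness of $Q$ combined with $\RE\phi[q]>0$ forces $q(\zeta_0)+mQ(\zeta_0)$ outside $q(\mathbb{D})$ is precisely the non-obvious part. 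If you intend this as a self-contained proof rather than a sketch, that step needs to be written out; otherwise what you have is a correct reduction to the admissibility criterion together with a verification of only one of the two alternative hypotheses.
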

\section{Results Associated with Lemniscate of Bernoulli}\label{2}
In the first result condition on $\beta$ is obtained so that the subordination $$1+\beta \frac{zp'(z)}{p^k(z)}\prec\frac{1+Az}{1+Bz}\;\; (-1<B<A\leq1)$$ implies $p(z)\prec\sqrt{1+z}.$
\begin{lem}\label{lem4}
  Let $|\beta|\geq 2^{(k+3)/2}(A-B)+|B\beta|, -1< k\leq3.$ Let $p$ be an analytic function defined on $\mathbb{D}$ with $p(0)=1$ satisfies
  $$1+\beta \frac{zp'(z)}{p^k(z)}\prec\frac{1+Az}{1+Bz}\quad  (-1<B<A\leq1),$$ then $p(z)\prec\sqrt{1+z}$
\end{lem}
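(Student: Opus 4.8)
The plan is to apply Lemma~\ref{miller} with the dominant $q(z)=\sqrt{1+z}$, so one needs to recast the hypothesis as a subordination of the form $zp'(z)\varphi(p(z))\prec zq'(z)\varphi(q(z))$. Since the hypothesis reads $\beta zp'(z)/p^k(z)\prec \big((1+Az)/(1+Bz)\big)-1 = (A-B)z/(1+Bz)$, the natural choice is $\varphi(w)=1/w^k$, so that $zp'(z)\varphi(p(z))=zp'(z)/p^k(z)$, and one must show that
\[
\frac{(A-B)z}{\beta(1+Bz)}\prec zq'(z)\varphi(q(z))=\frac{z}{2\sqrt{1+z}}\cdot\frac{1}{(1+z)^{k/2}}=\frac{z}{2(1+z)^{(k+1)/2}}.
\]
Write $\psi(z):=zq'(z)\varphi(q(z))=z/\big(2(1+z)^{(k+1)/2}\big)$. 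Then Lemma~\ref{miller} will give $p\prec q$ provided (i) $\psi$ is starlike univalent in $\mathbb{D}$, and (ii) the left-hand side is subordinate to $\psi$; note $p(0)=1=q(0)$ is automatic.

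First I would verify the starlikeness of $\psi$. A direct computation gives $z\psi'(z)/\psi(z)=1-\tfrac{k+1}{2}\cdot\tfrac{z}{1+z}$, whose real part on $\mathbb{D}$ is minimized on the boundary; since $\RE\big(z/(1+z)\big)>-1/2$ for $z\in\mathbb{D}$ (with infimum $-1/2$ approached as $z\to -1$), one gets $\RE\big(z\psi'(z)/\psi(z)\big)>1-\tfrac{k+1}{4}=\tfrac{3-k}{4}\ge 0$ precisely when $k\le 3$, which is exactly the stated range $-1<k\le 3$. Hence $\psi$ is starlike (univalent) on $\mathbb{D}$. The analyticity of $\varphi(w)=w^{-k}$ on a neighbourhood of $q(\mathbb{D})$ is clear because $q(\mathbb{D})$ is the right half of the lemniscate region, bounded away from $0$.

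The remaining and main task is step (ii): showing $(A-B)z/\big(\beta(1+Bz)\big)\prec \psi(z)$. Since both functions vanish at $0$ and $\psi$ is univalent, this is equivalent to showing the image of $\mathbb{D}$ under the linear-fractional map $g(z):=(A-B)z/\big(\beta(1+Bz)\big)$ is contained in $\psi(\mathbb{D})$. The image $g(\mathbb{D})$ is a disk (or half-plane if $|B|=1$, but here $-1<B<A\le1$, so $|B|<1$ and it is a genuine disk) centred at $-(A-B)\bar B/\big(\beta(1-|B|^2)\big)$ of radius $(A-B)/\big(|\beta|(1-|B|^2)\big)$; in particular it is contained in the disk $\{|w|<r_0\}$ with $r_0=(A-B)/\big(|\beta|(1-|B|)\big)$, using $|g(z)|\le (A-B)|z|/\big(|\beta|(1-|B||z|)\big)< (A-B)/\big(|\beta|(1-|B|)\big)$. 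So it suffices to show that the disk of radius $r_0$ about the origin lies inside $\psi(\mathbb{D})$, i.e. that $\min_{|z|=1}|\psi(z)|\ge r_0$ — or rather $|\psi(z)|> r_0$ throughout, obtained by estimating $|\psi(z)|=|z|/\big(2|1+z|^{(k+1)/2}\big)$ from below. The quantity $|1+z|\le 2$ on $\overline{\mathbb{D}}$, and more carefully one bounds $|\psi|$ on circles $|z|=r$; the worst case is $|1+z|$ as large as possible, giving $|\psi(z)|\ge r/\big(2(1+r)^{(k+1)/2}\big)$, whose infimum over the open disk needs care when $k+1<0$ versus $k+1\ge0$, but letting $r\to1$ gives the critical value $1/\big(2\cdot 2^{(k+1)/2}\big)=2^{-(k+3)/2}$. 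Requiring $2^{-(k+3)/2}\ge r_0 = (A-B)/\big(|\beta|(1-|B|)\big)$ rearranges to $|\beta|(1-|B|)\ge 2^{(k+3)/2}(A-B)$, i.e. $|\beta|\ge 2^{(k+3)/2}(A-B)+2^{(k+3)/2}|B|$. The stated hypothesis is the slightly sharper $|\beta|\ge 2^{(k+3)/2}(A-B)+|B\beta|$, which comes from the tighter containment using the actual disk $g(\mathbb{D})$ (centre plus radius $\le$ something) rather than the crude $r_0$ bound; I expect the precise bookkeeping here — comparing the displaced disk $g(\mathbb{D})$ against the non-convex region $\psi(\mathbb{D})$, and handling the sign of $k+1$ in the monotonicity of $r\mapsto r/(2(1+r)^{(k+1)/2})$ — to be the one genuinely delicate point, while everything else is routine. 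Once (i) and (ii) are in hand, Lemma~\ref{miller} yields $p(z)\prec\sqrt{1+z}$, completing the proof.
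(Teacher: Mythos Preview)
Your strategy is exactly the paper's: take $q(z)=\sqrt{1+z}$, verify that $Q(z)=\beta zq'(z)/q^k(z)$ is starlike, and reduce everything to showing $(1+Az)/(1+Bz)\prec h(z):=1+Q(z)$. The paper establishes this last subordination by inverting the M\"obius map $\Phi(z)=(1+Az)/(1+Bz)$ and checking $|\Phi^{-1}(h(e^{it}))|\ge 1$, while you compare images directly; the two computations are equivalent and yield the same condition.

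Two slips, however, make you think the argument is more delicate than it is:

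\emph{Starlikeness of $\psi$.} The correct bound is $\RE\big(z/(1+z)\big)<\tfrac12$ on $\mathbb{D}$ (the map $z\mapsto z/(1+z)$ sends $\mathbb{D}$ onto the half-plane $\RE w<\tfrac12$), not $>-\tfrac12$; the infimum is $-\infty$ as $z\to -1$. With the correct inequality and $k+1>0$ one still gets $\RE\big(z\psi'(z)/\psi(z)\big)=1-\tfrac{k+1}{2}\RE\big(z/(1+z)\big)>1-\tfrac{k+1}{4}=\tfrac{3-k}{4}\ge 0$ for $k\le 3$, so your conclusion stands.

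\emph{The final inequality.} Your ``crude'' estimate is already sharp, and there is nothing further to do: from $r_0=(A-B)/\big(|\beta|(1-|B|)\big)\le 2^{-(k+3)/2}$ you get
\[
|\beta|(1-|B|)\ge 2^{(k+3)/2}(A-B)\quad\Longleftrightarrow\quad |\beta|\ge 2^{(k+3)/2}(A-B)+|B|\,|\beta|= 2^{(k+3)/2}(A-B)+|B\beta|,
\]
which is precisely the stated hypothesis. (You miswrote the right-hand side as $2^{(k+3)/2}(A-B)+2^{(k+3)/2}|B|$.) Since $\psi(\mathbb{D})$ is starlike about $0$ and $\min_{|z|=1}|\psi(z)|=|\psi(1)|=2^{-(k+3)/2}$, the open disk of that radius lies in $\psi(\mathbb{D})$, and $|g(z)|<r_0\le 2^{-(k+3)/2}$ gives the desired containment immediately. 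No tighter bookkeeping with the displaced disk is needed.
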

\begin{proof}
 Let $q(z)=\sqrt{1+z}$. A computation shows that the function $$Q(z):=\beta \frac{zq'(z)}{q^k(z)}=\frac{\beta z}{2(1+z)^{(k+1)/2}} \;\; (-1<k\leq 3)$$ is starlike in the unit disk $\mathbb{D}$. Consider the subordination $$1+\beta \frac{zp'(z)}{p^k(z)}\prec 1+\beta \frac{zq'(z)}{q^k(z)}.$$ Thus in view of Lemma \ref{miller}, it follows that $p(z)\prec q(z)$.  In order to prove our result, we need to prove
  $$\frac{1+Az}{1+Bz}\prec 1+\frac{\beta z q'(z)}{q^k(z)}=1+\frac{\beta z}{2(1+z)^{(k+1)/2}}:=h(z)$$
   Let $w=\Phi(z)=\frac{1+Az}{1+Bz}.$ Then $\Phi^{-1}(w)=\frac{w-1}{A-Bw}.$ The subordination $\Phi(z)\prec h(z)$ is equivalent to $z\prec \Phi^{-1}(h(z))$. Thus in order to prove result, we need only to show $|\Phi^{-1}(h(e^{it}))|\geq1$.
  For $z=e^{it}, -\pi \leq t\leq \pi$, we have $$|\Phi^{-1}(h(e^{it}))|\geq\frac{|\beta|}{2(A-B)(2\cos (t/2)) ^{(k+1)/2}+|B\beta|}=:g(t).$$ A calculation shows that $g(t)$ attains its minimum at $t=0$. Further the value of $g(t)$ at $\pi$ or $-\pi$ comes out to be $1/|B|$ which is naturally greater than the value at the extreme point $t=0$ because if $g(0)\geq g(\pi)$, then $(A-B)|\beta|\leq 0$ which is absurd. Thus $$g(0)=\frac{|\beta|}{2^{(k+3)/2}(A-B)+|B\beta|}\geq1$$ for $|\beta|\geq 2^{(k+3)/2}(A-B)+|B\beta|$.
  Hence $\Phi(z)\prec h(z)$ and the proof is complete now.
\end{proof}
Next result depicts the condition on $\beta$ such that $1+\beta zp'(z)\prec \sqrt{1+z}$
 implies $p(z)\prec (1+Az)/(1+Bz)\;(-1\leq B<A\leq1).$
 On subsequent lemmas similar results are obtained by considering the expressions $1+\beta zp'(z)/p(z)$ and $1+\beta zp'(z)/p^2(z)$.
\begin{lem}\label{lm5}
  Let $(A-B)\beta\geq\sqrt{2}(1+|B|)^2+(1-B)^2$ and $-1\leq B<A\leq1.$ Let $p$ be an analytic function defined on $\mathbb{D}$ with $p(0)=1$ satisfies
  $$1+\beta zp'(z)\prec\sqrt{1+z},$$ then $p(z)\prec\frac{1+Az}{1+Bz}.$
\end{lem}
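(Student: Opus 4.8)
The plan is to run the proof of Lemma~\ref{lem4} with the roles of the two curves interchanged. Put $q(z)=(1+Az)/(1+Bz)$, so that $q$ is univalent, $q(0)=1$ and $q'(z)=(A-B)/(1+Bz)^{2}$. Taking for $\varphi$ the constant function $\beta$, set
\[
Q(z):=zq'(z)\varphi(q(z))=\frac{\beta(A-B)z}{(1+Bz)^{2}},\qquad h(z):=1+Q(z).
\]
A direct computation gives $zQ'(z)/Q(z)=(1-Bz)/(1+Bz)$, and since $|B|\le 1$ the point $Bz$ stays in the unit disk, so this expression has positive real part on $\mathbb{D}$; hence $Q$ is starlike and, in particular, $h=1+Q$ is univalent.

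First I would reduce the implication to an inclusion of curves. By Lemma~\ref{miller}, applied with the constant multiplier $\varphi\equiv\beta$ (so that $zq'(z)\varphi(q(z))=Q$ is indeed starlike), the subordination $\beta zp'(z)\prec\beta zq'(z)$, that is $1+\beta zp'(z)\prec h(z)$, already forces $p\prec q$. Hence it suffices to prove $\sqrt{1+z}\prec h(z)$: chaining this with the hypothesis $1+\beta zp'(z)\prec\sqrt{1+z}$ gives $1+\beta zp'(z)\prec h(z)$, and the conclusion follows.

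To establish $\sqrt{1+z}\prec h(z)$ I would argue exactly as in Lemma~\ref{lem4}. With $\Phi(z)=\sqrt{1+z}$ (so $\Phi^{-1}(w)=w^{2}-1$) the subordination $\Phi\prec h$ is equivalent to $z\prec\Phi^{-1}(h(z))=h(z)^{2}-1$, and, $h$ being univalent, this follows once $h(e^{it})$ stays outside the lemniscate domain for every $t\in[-\pi,\pi]$, i.e. $|h(e^{it})^{2}-1|\ge 1$. Writing $c:=\beta(A-B)>0$ and factoring $h^{2}-1=(h-1)(h+1)$ one gets
\[
h(e^{it})^{2}-1=\frac{c\,e^{it}\bigl[\,2(1+Be^{it})^{2}+c\,e^{it}\,\bigr]}{(1+Be^{it})^{4}}.
\]
Expanding $2(1+Be^{it})^{2}=2+4Be^{it}+2B^{2}e^{2it}$ and grouping the $e^{it}$-terms, the bracket equals $2+(c+4B)e^{it}+2B^{2}e^{2it}$, whose modulus is at least $(c+4B)-(2+2B^{2})=c-2(1-B)^{2}$; bounding $|1+Be^{it}|^{4}$ above by $(1+|B|)^{4}$ then reduces the required estimate to
\[
c\bigl(c-2(1-B)^{2}\bigr)\ge(1+|B|)^{4},\qquad\text{equivalently}\qquad c\ge(1-B)^{2}+\sqrt{(1-B)^{4}+(1+|B|)^{4}}.
\]
Since $(1+|B|)^{2}\ge(1-B)^{2}$ for every $B\in[-1,1]$, the hypothesis $\beta(A-B)\ge\sqrt{2}\,(1+|B|)^{2}+(1-B)^{2}$ dominates the right-hand side, which completes the proof.

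The one genuinely delicate point is the modulus estimate just used. The naive bound $|h^{2}-1|=|h-1|\,|h+1|\ge|h-1|\bigl(|h-1|-2\bigr)$ can drop below $1$ on $|z|=1$, so it is essential to expand $2(1+Be^{it})^{2}$ and absorb the term $4Be^{it}$ into $c\,e^{it}$ \emph{before} estimating; after that the identity $4B-2-2B^{2}=-2(1-B)^{2}$ and the elementary inequality $(1+|B|)^{2}\ge(1-B)^{2}$ do the rest. One should also record the routine points that $c+4B>0$ (so that the reverse triangle inequality is applied with the claimed sign, which is immediate from the hypothesis) and that the only place where the displayed formula degenerates — namely $t=0$ when $B=-1$, where $q$ has a boundary pole — is harmless since there $|h(e^{it})^{2}-1|\to\infty$.
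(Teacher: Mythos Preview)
Your proof is correct and follows the paper's approach exactly: the same choice $q(z)=(1+Az)/(1+Bz)$, the same starlikeness check for $Q(z)=\beta zq'(z)$, the same reduction via Lemma~\ref{miller} to proving $\sqrt{1+z}\prec h$, and the same boundary criterion $|h(e^{it})^{2}-1|\ge 1$. The only cosmetic difference is how that last inequality is verified---the paper passes through the sufficient condition $|h(e^{it})|\ge\sqrt{2}$ (using $|w^{2}-1|\ge|w|^{2}-1$) and bounds $|h|$ by a real-part estimate on the numerator, whereas you factor $h^{2}-1=(h-1)(h+1)$ and apply the reverse triangle inequality directly; both computations collapse to the stated hypothesis $\beta(A-B)\ge\sqrt{2}(1+|B|)^{2}+(1-B)^{2}$.
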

\begin{proof}
  Define the function $q:\mathbb{D}\rightarrow\mathbb{C}$ by $$q(z)=\frac{1+Az}{1+Bz}\;\;\;(-1\leq B<A\leq1)$$ with $q(0)=1$. A computation shows that $$Q(z)=\beta zq'(z)=\frac{\beta(A-B)z}{(1+Bz)^2}$$ and $$\frac{zQ'(z)}{Q(z)}=\frac{1-Bz}{1+Bz}.$$ Let $z=re^{it}, r\in(0, 1), -\pi\leq t\leq \pi$. Then
  \begin{eqnarray*}
    \RE\left(\frac{1-Bz}{1+Bz}\right) &=& \RE\left(\frac{1-Bre^{it}}{1+Bre^{it}}\right)\\
    &=&  \frac{1-B^2r^2}{|1+Bre^{it}|^2}.
  \end{eqnarray*}
  Since $1-B^2r^2>0\; (|B|\leq 1, 0<r<1)$ and so $\RE (zQ'(z)/Q(z))>0$, this shows that $Q$ is starlike in $\mathbb{D}.$  It follows from Lemma \ref{miller}, that the subordination  $$1+\beta zp'(z)\prec1+\beta zq'(z)$$ implies $p(z)\prec q(z).$ Now we need to prove the following in order to prove lemma:
    $$\sqrt{1+z}\prec1+\beta zq'(z)=1+\beta\frac{(A-B)z}{(1+Bz)^2}=:h(z).$$
  Let $w=\Phi(z)=\sqrt{1+z}.$ Then $\Phi^{-1}(w)=w^2-1.$
   The subordination $\Phi(z)\prec h(z)$ is equivalent to the subordination
 $z\prec \Phi^{-1}(h(z)).$ Now in order to prove result it is enough to show $|\Phi^{-1}(h(e^{it}))|\geq1,\; z=e^{it},\;-\pi\leq t\leq \pi.$ Now
 $$|\Phi^{-1}(h(e^{it}))|=\left|\left(1+\beta\frac{(A-B)e^{it}}{(1+Be^{it})^2}\right)^2-1\right|\geq1\;\;\text{implies that}\;\;\left|1+\beta\frac{(A-B)e^{it}}{(1+Be^{it})^2}\right|\geq\sqrt{2}.$$
 Further \begin{eqnarray*}
     \left|1+\beta\frac{(A-B)e^{it}}{(1+Be^{it})^2}\right|&=& \frac{|1+(2 B+\beta(A-B))e^{it}+B^2e^{2it}|}{|1+2Be^{it}+B^2e^{2it}|}\\
     &\geq& \frac{\RE(2 B+\beta(A-B)+B^2e^{it}+e^{-it})}{1+2|B|+B^2}\\
     &=& \frac{2 B+\beta(A-B)+(1+B^2) \cos t}{(1+|B|)^2}\\
     &\geq& \frac{2 B+\beta(A-B)-(1+B^2)}{(1+|B|)^2}\geq\sqrt{2}
  \end{eqnarray*}
  for $(A-B)\beta\geq\sqrt{2}(1+|B|)^2+(1-B)^2$. Therefore $\Phi(z)\prec h(z)$ and this completes the proof.
\end{proof}
\begin{lem}
  Let $(A-B)\beta\geq(\sqrt{2}-1)(1+|A|)(1+|B|)$ and $-1\leq B<A\leq1.$ Let $p$ be an analytic function defined on $\mathbb{D}$ with $p(0)=1$ satisfies
  $$1+\beta \frac{zp'(z)}{p(z)}\prec\sqrt{1+z},$$ then $p(z)\prec\frac{1+Az}{1+Bz}.$
\end{lem}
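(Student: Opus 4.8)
The plan is to mimic the proofs of Lemmas~\ref{lem4} and~\ref{lm5}, taking $q(z)=(1+Az)/(1+Bz)$, so that $q(0)=1$, as the prospective dominant. A short computation gives
$$Q(z):=\beta\,\frac{zq'(z)}{q(z)}=\frac{\beta(A-B)z}{(1+Az)(1+Bz)},\qquad \frac{zQ'(z)}{Q(z)}=1-\frac{Az}{1+Az}-\frac{Bz}{1+Bz}.$$
Since $\RE\dfrac{w}{1+w}<\dfrac12$ for $|w|<1$ (inversion carries the disc $|\zeta-1|<1$ onto the half-plane $\RE\zeta>\tfrac12$), one gets $\RE\bigl(zQ'(z)/Q(z)\bigr)>0$, so $Q$ is starlike in $\mathbb{D}$. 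Hence, by Lemma~\ref{miller} with $\varphi(w)=\beta/w$, the subordination $1+\beta\,zp'(z)/p(z)\prec 1+\beta\,zq'(z)/q(z)$ implies $p\prec q$; and since $1+\beta\,zp'(z)/p(z)\prec\sqrt{1+z}$ is given, transitivity of subordination reduces the lemma to proving
$$\sqrt{1+z}\prec h(z):=1+\frac{\beta(A-B)z}{(1+Az)(1+Bz)}.$$

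As $h-1=\beta\,zq'/q$ is a translate of a starlike, hence univalent, function, $h$ is univalent, so the last subordination is equivalent to $\Phi(\mathbb{D})\subseteq h(\mathbb{D})$, where $\Phi(z)=\sqrt{1+z}$ (with $\Phi^{-1}(w)=w^{2}-1$); equivalently $z\prec h(z)^{2}-1$. Since $\Phi(\mathbb{D})$ is the right-hand loop of the lemniscate $|w^{2}-1|<1$ and $1=h(0)\in h(\mathbb{D})$, it suffices to show that the boundary curve $t\mapsto h(e^{it})$ never lies in that loop; using $|w^{2}-1|\ge|w|^{2}-1$, this follows once
$$\bigl|h(e^{it})\bigr|\ge\sqrt2\qquad(-\pi\le t\le\pi).$$
Writing $h(e^{it})=\dfrac{(1+Ae^{it})(1+Be^{it})+\beta(A-B)e^{it}}{(1+Ae^{it})(1+Be^{it})}$, one bounds the numerator below and the denominator above and is left with an inequality in $\cos t$ to be minimised over $t$.

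I expect this minimisation to be the main obstacle — it is the counterpart of the step in Lemma~\ref{lem4} where the auxiliary function is shown to attain its minimum at $t=0$. The decisive point is $z=1$, at which $h(1)=1+\beta(A-B)/[(1+A)(1+B)]$ is real and exceeds $1$, so that $|h(1)|\ge\sqrt2$ reads $\beta(A-B)/[(1+A)(1+B)]\ge\sqrt2-1$, i.e.\ $(A-B)\beta\ge(\sqrt2-1)(1+|A|)(1+|B|)$ — exactly the hypothesis. The endpoints $t=\pm\pi$ must be treated separately and, as in Lemma~\ref{lem4}, are seen to give the larger value; once the minimum is localised at $z=1$ the desired subordination follows, and with it the lemma.
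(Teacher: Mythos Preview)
Your overall strategy coincides with the paper's: same choice $q(z)=(1+Az)/(1+Bz)$, same starlikeness check for $Q$ (the paper writes $zQ'/Q=(1-ABz^{2})/[(1+Az)(1+Bz)]$ and estimates its real part by cases on the sign of $A+B$, but your argument via $\RE\bigl(w/(1+w)\bigr)<\tfrac12$ is an acceptable alternative), same reduction through Lemma~\ref{miller} to proving $\sqrt{1+z}\prec h(z)$, and the same passage to the sufficient inequality $|h(e^{it})|\ge\sqrt2$.

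The gap is in how you propose to obtain $|h(e^{it})|\ge\sqrt2$. You rewrite $h$ as a single quotient, plan to bound numerator below and denominator above, and announce a minimisation in $\cos t$ that you do not perform. The paper avoids this minimisation altogether: it simply uses $|h(e^{it})|\ge\RE h(e^{it})$ and asserts the lower bound
\[
\RE\!\left(1+\frac{\beta(A-B)e^{it}}{(1+Ae^{it})(1+Be^{it})}\right)\;\ge\;1+\frac{(A-B)\beta}{(1+|A|)(1+|B|)},
\]
from which the hypothesis gives $\sqrt2$ in one stroke. That single real-part estimate is the device you are missing; it replaces your entire minimisation programme.

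There is also a concrete slip in your sketch. At $z=1$ you get $h(1)=1+\beta(A-B)/[(1+A)(1+B)]$, with denominator $(1+A)(1+B)$ rather than the $(1+|A|)(1+|B|)$ in the hypothesis. These agree only when $A,B\ge0$; if $A$ or $B$ is negative the point $t=0$ need not be where $|h(e^{it})|$ is smallest, so your assertion that ``the decisive point is $z=1$'' is unjustified in general and does not reproduce the stated constant. The paper's real-part route yields the factor $(1+|A|)(1+|B|)$ directly, uniformly in the signs of $A$ and $B$.
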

\begin{proof}
  Let the function $q:\mathbb{D}\rightarrow\mathbb{C}$ be defined by $$q(z)=\frac{1+Az}{1+Bz}\;\;\;(-1\leq B<A\leq1).$$ A computation shows that $$Q(z):=\frac{\beta zq'(z)}{q(z)}=\frac{\beta(A-B)z}{(1+Az)(1+Bz)}$$ and $$\frac{zQ'(z)}{Q(z)}=\frac{1-ABz^2}{(1+Az)(1+Bz)}.$$ Let $z=re^{it}, r\in(0, 1), -\pi\leq t\leq \pi$. Then
  \begin{eqnarray*}
    \RE\left(\frac{1-ABz^2}{(1+Az)(1+Bz)}\right) &=& \RE\left(\frac{1-ABr^2e^{2it}}{(1+Are^{it})(1+Bre^{it})}\right)\\
    &=&  \frac{(1-ABr^2)(1+(A+B)r \cos t+ABr^2)}{|1+Are^{it}|^2|1+Bre^{it}|^2}.
  \end{eqnarray*}
  Since $1+ABr^2+(A+B)r\cos t\geq (1-Ar)(1-Br)>0$ for $A+B\geq0$ and similarly $1+ABr^2+(A+B)r\cos t\geq (1+Ar)(1+Br)>0$ for $A+B\leq0$. It follows that $Q$ is starlike in $\mathbb{D}.$ Lemma \ref{miller} suggests us that the subordination  $$1+\beta \frac{zp'(z)}{p(z)}\prec1+\beta \frac{zq'(z)}{q(z)}$$ implies $p(z)\prec q(z).$ Now we have to prove
  $$\sqrt{1+z}\prec1+\beta \frac{zq'(z)}{q(z)}=1+\frac{\beta(A-B)z}{(1+Az)(1+Bz)}=:h(z).$$
  Let $w=\Phi(z)=\sqrt{1+z}.$ Then $\Phi^{-1}(w)=w^2-1.$
 The subordination $\Phi(z)\prec h(z)$ is equivalent to the subordination
 $z\prec \Phi^{-1}(h(z)).$  Now in order to prove result it is enough to show $|\Phi^{-1}(h(e^{it}))|\geq1,\; -\pi\leq t\leq \pi.$  Now
  $$\left|\Phi^{-1}(h(e^{it}))\right|=\left|\left(1+\frac{\beta(A-B)e^{it}}
  {(1+Ae^{it})(1+Be^{it})}\right)^2-1\right|\geq1\;\;\text{implies that}\;\;\left|1+\frac{\beta(A-B)e^{it}}
  {(1+Ae^{it})(1+Be^{it})}\right|\geq\sqrt{2}.$$ Further
  \begin{eqnarray*}
    \left|1+\frac{\beta(A-B)e^{it}}
  {(1+Ae^{it})(1+Be^{it})}\right| &\geq& \RE\left(1+\frac{\beta(A-B)e^{it}}{(1+Ae^{it})(1+Be^{it})}\right) \\
   &=& 1+\frac{(A-B)\beta}{(1+|A|)(1+|B|)}\geq\sqrt{2},\end{eqnarray*}
  for $(A-B)\beta\geq(\sqrt{2}-1)(1+|A|)(1+|B|)$. Therefore $\Phi(z)\prec h(z)$ and this completes the proof.
\end{proof}
\begin{lem}
  Let $(A-B)\beta\geq(\sqrt{2}-1)(1+|A|)^2+(1-A)^2$ and $-1\leq B<A\leq1.$ Let $p$ be an analytic function defined on $\mathbb{D}$ with $p(0)=1$ satisfies
  $$1+\beta \frac{zp'(z)}{p^2(z)}\prec\sqrt{1+z},$$ then $p(z)\prec\frac{1+Az}{1+Bz}.$
\end{lem}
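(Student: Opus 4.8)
The plan is to run exactly the argument used for Lemma~\ref{lm5} and the two lemmas preceding it, with the role previously played by $B$ now played by $A$. Set $q(z)=(1+Az)/(1+Bz)$, so that $q(0)=1$ and, for $-1\le B<A\le1$, $0\notin q(\mathbb D)$; take $\varphi(w)=\beta/w^{2}$, which is then analytic on a domain containing $q(\mathbb D)$. A short computation gives
\[
Q(z):=\frac{\beta zq'(z)}{q^{2}(z)}=\frac{\beta(A-B)z}{(1+Az)^{2}},\qquad \frac{zQ'(z)}{Q(z)}=\frac{1-Az}{1+Az},
\]
and for $z=re^{it}$ one has $\RE\bigl((1-Az)/(1+Az)\bigr)=(1-A^{2}r^{2})/|1+Are^{it}|^{2}>0$ since $|A|\le1$; hence $Q$ is starlike in $\mathbb D$. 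By Lemma~\ref{miller}, the subordination $1+\beta zp'(z)/p^{2}(z)\prec 1+\beta zq'(z)/q^{2}(z)$ forces $p\prec q$, so it suffices to prove the single subordination
\[
\sqrt{1+z}\prec 1+\frac{\beta(A-B)z}{(1+Az)^{2}}=:h(z),
\]
because chaining it with the hypothesis $1+\beta zp'(z)/p^{2}(z)\prec\sqrt{1+z}$ supplies precisely the subordination needed to invoke Lemma~\ref{miller}.

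For this displayed subordination I would argue as in Lemma~\ref{lm5}. With $\Phi(z)=\sqrt{1+z}$ (univalent, with entire inverse $\Phi^{-1}(w)=w^{2}-1$), the relation $\Phi\prec h$ is equivalent to $z\prec\Phi^{-1}(h(z))=h(z)^{2}-1$; since the left-hand side is the identity map and $h(0)^{2}-1=0$, this holds once $|h(e^{it})^{2}-1|\ge1$ for every real $t$. As $|w^{2}-1|\ge|w|^{2}-1$ whenever $|w|\ge1$, it is enough to show $|h(e^{it})|\ge\sqrt2$.

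This boundary estimate is the crux, and the only place I expect genuine work. Writing $h(e^{it})$ over a common denominator and factoring $e^{it}$ out of the numerator,
\[
|h(e^{it})|=\frac{|1+(2A+\beta(A-B))e^{it}+A^{2}e^{2it}|}{|1+2Ae^{it}+A^{2}e^{2it}|}\ge\frac{\RE(2A+\beta(A-B)+A^{2}e^{it}+e^{-it})}{(1+|A|)^{2}}=\frac{2A+\beta(A-B)+(1+A^{2})\cos t}{(1+|A|)^{2}},
\]
bounding the numerator below by its real part and the denominator $|1+Ae^{it}|^{2}$ above by $(1+|A|)^{2}$. Minimising over $t$ — the minimum being at $\cos t=-1$, and one must check, just as in Lemma~\ref{lm5}, that the value at $t=\pm\pi$ really is the extreme one, the alternative forcing $(A-B)\beta\le0$, which is absurd — one gets
\[
|h(e^{it})|\ge\frac{2A+\beta(A-B)-(1+A^{2})}{(1+|A|)^{2}},
\]
and the matter reduces to asking that this right-hand side be $\ge\sqrt2$, which is exactly a lower bound on $(A-B)\beta$ of the shape appearing in the hypothesis. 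That yields $\Phi\prec h$ and finishes the proof. The main obstacle is thus twofold: quantifying how lossy the two estimates in the last display are (so that the resulting bound on $\beta$ is the one claimed, not a larger one), and confirming that the function of $t$ being minimised is genuinely minimised at $\cos t=-1$ — the modulus of a ratio of trigonometric polynomials need not be monotone in $t$ — which is precisely the endpoint comparison carried out in Lemma~\ref{lm5}. A preliminary point to dispose of is that $0\notin q(\mathbb D)$, so that $\varphi(w)=\beta/w^{2}$ is legitimately analytic on $q(\mathbb D)$.
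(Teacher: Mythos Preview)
Your approach is exactly the paper's: set $q(z)=(1+Az)/(1+Bz)$, verify $Q(z)=\beta(A-B)z/(1+Az)^{2}$ is starlike via $zQ'/Q=(1-Az)/(1+Az)$, invoke Lemma~\ref{miller}, and then prove $\sqrt{1+z}\prec h(z)$ by the same chain of inequalities used in Lemma~\ref{lm5} with $A$ in place of $B$. The paper in fact says only ``Rest part of the proof is similar to that of Lemma~\ref{lm5}'' and gives no further details, so your write-up is strictly more explicit than theirs.

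One substantive caution, though. You stop at ``a lower bound on $(A-B)\beta$ of the shape appearing in the hypothesis'' without actually computing it. If you carry your own arithmetic through, the requirement
\[
\frac{2A+\beta(A-B)-(1+A^{2})}{(1+|A|)^{2}}\ge\sqrt{2}
\]
rearranges to $(A-B)\beta\ge \sqrt{2}\,(1+|A|)^{2}+(1-A)^{2}$, \emph{not} to the stated hypothesis $(A-B)\beta\ge(\sqrt{2}-1)(1+|A|)^{2}+(1-A)^{2}$. This is the direct analogue of the bound in Lemma~\ref{lm5} (where the coefficient is also $\sqrt{2}$), and since the paper's proof here is literally ``do Lemma~\ref{lm5} again,'' the coefficient $(\sqrt{2}-1)$ in the statement appears to be a misprint for $\sqrt{2}$. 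Your argument proves the lemma with the corrected constant; under the weaker printed hypothesis the estimate does not close, and neither does the paper's.

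A minor remark: your worry about locating the minimum in $t$ is misplaced here. You have already replaced $|h(e^{it})|$ by a lower bound that is an affine function of $\cos t$ with coefficient $1+A^{2}>0$, so its minimum over $t$ is trivially at $\cos t=-1$; no endpoint comparison of the kind in Lemma~\ref{lem4} is needed. The check ``alternative forces $(A-B)\beta\le0$'' belongs to Lemma~\ref{lem4}, not Lemma~\ref{lm5}.
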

\begin{proof}
  Let the function $q:\mathbb{D}\rightarrow\mathbb{C}$ be defined by $$q(z)=\frac{1+Az}{1+Bz}\;\;\;(-1\leq B<A\leq1)$$ with $q(0)=1$. Then $$Q(z)=\frac{\beta zq'(z)}{q^2(z)}=\frac{\beta(A-B)z}{(1+Az)^2}$$ and $$\frac{zQ'(z)}{Q(z)}=\frac{1-Az}{1+Az}.$$ Let $z=re^{it}, -\pi\leq t\leq \pi, 0<r<1$. Then
$$ \RE\left(\frac{1-Az}{1+Az}\right)=\frac{1-A^2r^2}{|1+Are^{it}|^2}.$$
  Since $1-A^2r^2>0\; (|A|\leq 1, 0<r<1)$. Hence $\RE (zQ'(z))/Q(z)>0$, this shows that $Q$ is starlike in $\mathbb{D}.$ An application of Lemma \ref{miller} reveals that the subordination  $$1+\beta \frac{zp'(z)}{p^2(z)}\prec1+\beta \frac{zq'(z)}{q^2(z)}$$ implies $p(z)\prec q(z).$ Now our result established if we prove
  $$\sqrt{1+z}\prec1+\beta \frac{zq'(z)}{q^2(z)}=1+\beta\frac{(A-B)z}{(1+Az)^2}=:h(z).$$
 Rest part of the proof is similar to that of Lemma~\ref{lm5} and therefore it is skipped here.
 \end{proof}
 In the next result condition on $\beta$ is obtained so that $p(z)+\beta zp'(z)\prec \sqrt{1+z}$ implies $p(z)\prec \sqrt{1+z}$. On subsequent lemmas similar results are discussed by considering the expressions $p(z)+\beta zp'(z)/p(z)$ and $p(z)+\beta zp'(z)/p^2(z)$.
\begin{lem} Let $p$ be an analytic function defined on $\mathbb{D}$ with $p(0)=1$ satisfies
$p(z)+\beta zp'(z)\prec \sqrt{1+z},\; \beta>0$. Then $p(z)\prec \sqrt{1+z}$.
\end{lem}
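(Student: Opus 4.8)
The plan is to apply Lemma~\ref{miller1} with $q(z)=\sqrt{1+z}$ and $\phi(w)\equiv\beta$, so that $Q(z)=zq'(z)\phi[q(z)]=\beta zq'(z)=\beta z/(2\sqrt{1+z})$. First I would verify the hypotheses of Lemma~\ref{miller1}: condition~(1) is immediate since $\RE\phi[q(z)]=\beta>0$; for the remaining hypothesis it suffices to check that $Q(z)=\beta z/(2\sqrt{1+z})$ is starlike in $\mathbb{D}$. This is the same function that appeared (up to the power of $(1+z)$) in the proof of Lemma~\ref{lem4} with $k=1$, so the starlikeness computation reduces to showing $\RE\bigl(zQ'(z)/Q(z)\bigr)>0$, i.e. $\RE\bigl((2+z)/(2(1+z))\bigr)>0$, which holds on $\mathbb{D}$ since the image of $(2+z)/(2(1+z))$ lies in the right half-plane.

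With the hypotheses of Lemma~\ref{miller1} in place, the subordination $p(z)+\beta zp'(z)\prec\sqrt{1+z}=q(z)$ gives exactly $p(z)+zp'(z)\phi[p(z)]\prec q(z)$, hence $p(z)\prec q(z)=\sqrt{1+z}$, which is the desired conclusion. Note that unlike the earlier lemmas in this section, no auxiliary subordination of the form $\Phi(z)\prec h(z)$ and no estimate $|\Phi^{-1}(h(e^{it}))|\ge 1$ is needed here, because the right-hand side of the hypothesis is already $q(z)$ itself rather than a Janowski function; this is why the only condition on $\beta$ is the mild requirement $\beta>0$.

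The only real point to be careful about is the starlikeness of $Q$, and even that is routine: one writes $zQ'(z)/Q(z)=1+z q''(z)\!/q'(z)-\ldots$ directly, or more simply differentiates $Q(z)=\beta z(1+z)^{-1/2}/2$ to get $zQ'(z)/Q(z)=1-\tfrac{z}{2(1+z)}=\tfrac{2+z}{2(1+z)}$, and observes that for $|z|<1$ the Möbius image of the disk under $z\mapsto(2+z)/(2(1+z))$ is a disk contained in $\{\RE w>0\}$ (its boundary circle passes through $w=3/4$ at $z=1$ and $w=\infty$-avoiding interior, with $w=1/2$ at $z=-1$, both having positive real part). Hence $Q$ is starlike univalent, hypothesis~(3) of Lemma~\ref{miller1} holds, and the proof is complete. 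I expect the write-up to be only a few lines.
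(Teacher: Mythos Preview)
Your proposal is correct and follows essentially the same route as the paper: apply Lemma~\ref{miller1} with $q(z)=\sqrt{1+z}$, $\phi(w)\equiv\beta$, verify $\RE\phi[q(z)]=\beta>0$, and check that $Q(z)=\beta z/(2\sqrt{1+z})$ is starlike via $zQ'(z)/Q(z)=1-\tfrac{z}{2(1+z)}$, whose real part exceeds $3/4$. One small slip in your last paragraph: the M\"obius map $(2+z)/(2(1+z))$ sends $\partial\mathbb{D}$ to a \emph{line} (since $z=-1$ goes to $\infty$, not to $1/2$), so the image of $\mathbb{D}$ is the half-plane $\{\RE w>3/4\}$, not a disk---but this only strengthens your conclusion.
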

\begin{proof}Define the function $q:\mathbb{D}\rightarrow \mathbb{C}$ by
$q(z)=\sqrt{1+z}$ with $q(0)=1$. Since  $q(\mathbb{D})=\{w :
|w^2-1|<1\}$ is the right half of the lemniscate of Bernoulli,
$q(\mathbb{D})$ is a convex set and hence $q$ is a convex function.
Let us define $\phi(w)=\beta$ then \[ \RE \phi[q(z)]= \beta>0.\]
Consider the function $Q$ defined by
$$Q(z):= zq'(z)\phi(q(z))=\beta\frac{z}{2\sqrt{1+z}}.$$ Further
\begin{eqnarray*}
  \RE\left(\frac{zQ'(z)}{Q(z)}\right) &=& 1-\RE\left(\frac{z}{2(1+z)}\right) \\
   &\geq& \frac{3}{4}>0.
\end{eqnarray*}
 Thus the function $Q$ is starlike and the result now follows by an application of Lemma~\ref{miller1}.
\end{proof}
\begin{lem} Let $p$ be an analytic function defined on $\mathbb{D}$ with $p(0)=1$ satisfies
$$p(z)+\beta\frac{zp'(z)}{p(z)}\prec \sqrt{1+z}, \; \beta>0.$$ Then $p(z)\prec \sqrt{1+z}$.
\end{lem}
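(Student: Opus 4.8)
The plan is to apply Lemma~\ref{miller1} with $q(z)=\sqrt{1+z}$ and $\phi(w)=\beta/w$, proceeding exactly as in the preceding lemma but with this new choice of $\phi$. As already recorded there, $q(\mathbb{D})=\{w:|w^2-1|<1\}$ is the convex region bounded by the right loop of the lemniscate of Bernoulli, so $q$ is a convex function; moreover every point of $q(\mathbb{D})$ lies in the right half-plane (the right loop satisfies $|\arg w|\le \pi/4$), so $q$ is non-vanishing and $\phi(w)=\beta/w$ is analytic on a domain $D$ (e.g.\ the right half-plane) containing $q(\mathbb{D})$. Since the very form of the hypothesis $p(z)+\beta zp'(z)/p(z)\prec\sqrt{1+z}$ forces $p$ to be non-vanishing, we may also arrange $p(\mathbb{D})\subset D$.

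Next I would verify hypothesis (1) of Lemma~\ref{miller1}. Here $\phi[q(z)]=\beta/\sqrt{1+z}$, and since $\RE\sqrt{1+z}>0$ for $z\in\mathbb{D}$ (again because the interior of the right half of the Bernoulli lemniscate stays in the right half-plane), its reciprocal $1/\sqrt{1+z}$ also has positive real part; hence $\RE\phi[q(z)]=\beta\,\RE\!\big(1/\sqrt{1+z}\big)>0$ as $\beta>0$. Since $q$ is already convex, condition (2) is in force and the proof could stop here, but for completeness one can also check condition (3): a short computation gives
\[Q(z)=zq'(z)\phi[q(z)]=\frac{\beta z}{2(1+z)},\qquad \frac{zQ'(z)}{Q(z)}=\frac{1}{1+z},\]
so $\RE\!\big(zQ'(z)/Q(z)\big)=\RE\!\big(1/(1+z)\big)>1/2>0$ and $Q$ is starlike in $\mathbb{D}$.

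Finally, observe that $zp'(z)\phi[p(z)]=\beta zp'(z)/p(z)$, so the hypothesis is precisely $p(z)+zp'(z)\phi[p(z)]\prec q(z)$, and Lemma~\ref{miller1} yields $p(z)\prec q(z)=\sqrt{1+z}$, as required. The only genuinely delicate point is the geometric fact that $\RE\sqrt{1+z}>0$ on $\mathbb{D}$ (needed to make $\RE\phi[q(z)]>0$), together with the tacit non-vanishing of $p$ that is built into the subordination itself; the remaining computations are the same routine ones as in the previous lemma.
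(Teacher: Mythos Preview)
Your argument is correct and follows essentially the same route as the paper: set $q(z)=\sqrt{1+z}$, $\phi(w)=\beta/w$, verify $\RE\phi[q(z)]>0$ via $\RE\sqrt{1+z}>0$, check that $Q(z)=\beta z/(2(1+z))$ is starlike, and invoke Lemma~\ref{miller1}. The paper's proof is identical in all substantive respects, including the computation $\RE\bigl(zQ'(z)/Q(z)\bigr)\ge 1/2$.
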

\begin{proof} As before let $q$ be given by
$q(z)=\sqrt{1+z}$ with $q(0)=1$. Then $q$ is a convex function.
Let us define $\phi(w)=\beta/w$. Since  $q(\mathbb{D})=\{w :
|w^2-1|<1\}$ is the right half of the lemniscate of Bernoulli and so  \[ \RE \phi[q(z)]=\frac{\beta}{|\sqrt{1+z}|^2}\RE \left( \sqrt{1+z}\right)>0.\]
Consider the function $Q$ defined by
$$Q(z):= \beta\frac{zq'(z)}{q(z)}=\beta\frac{z}{2(1+z)}.$$ Further
\begin{eqnarray*}
  \RE\left(\frac{zQ'(z)}{Q(z)}\right) &=& 1-\RE\left(\frac{z}{1+z}\right) \\
   &\geq& \frac{1}{2}>0.
\end{eqnarray*} Thus the function $Q$ is starlike and the result now follows by an application of Lemma~\ref{miller1}.
\end{proof}
\begin{lem} Let $p$ be an analytic function defined on $\mathbb{D}$ with $p(0)=1$ satisfies
$$p(z)+\beta\frac {zp'(z)}{p^2(z)}\prec \sqrt{1+z},\;\;\beta>0.$$ Then $p(z)\prec \sqrt{1+z}$.
\end{lem}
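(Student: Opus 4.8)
\emph{Proof proposal.} The plan is to mimic the two preceding lemmas and apply Lemma~\ref{miller1} with $q(z)=\sqrt{1+z}$. First I would record that $q(\mathbb{D})=\{w:|w^2-1|<1\}$ is the interior of the right loop of the lemniscate of Bernoulli, which is a convex domain; hence $q$ is a convex (in particular univalent) function with $q(0)=1$, which takes care of alternative~(2) in Lemma~\ref{miller1}. The correct choice of $\phi$ is forced by the form of the perturbation term $\beta zp'(z)/p^2(z)$: I would set $\phi(w)=\beta/w^2$, so that $p(z)+zp'(z)\phi[p(z)]=p(z)+\beta zp'(z)/p^2(z)$ and $q(z)+zq'(z)\phi[q(z)]=q(z)$, making the hypothesis precisely the subordination required in Lemma~\ref{miller1}.

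Next I must verify the two remaining conditions. For condition~(1), since $\phi[q(z)]=\beta/(1+z)$, we have $\RE\phi[q(z)]=\beta\,\RE\bigl(1/(1+z)\bigr)$, and because $1/(1+z)$ maps $\mathbb{D}$ onto the half-plane $\RE w>1/2$, it follows that $\RE\phi[q(z)]>\beta/2>0$ for $\beta>0$. For the starlikeness of
\[
Q(z):=zq'(z)\phi[q(z)]=\frac{\beta z}{2(1+z)^{3/2}},
\]
I would pass to the logarithmic derivative, $zQ'(z)/Q(z)=1-\tfrac32\,z/(1+z)$, and use the fact that $z/(1+z)$ maps $\mathbb{D}$ into $\{\RE w<1/2\}$ to obtain
\[
\RE\!\left(\frac{zQ'(z)}{Q(z)}\right)=1-\frac{3}{2}\,\RE\!\left(\frac{z}{1+z}\right)>1-\frac{3}{2}\cdot\frac{1}{2}=\frac14>0,
\]
so that $Q$ is starlike in $\mathbb{D}$, which establishes alternative~(3). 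With conditions (1) and (3) (or (2)) in hand and the hypothesis being exactly $p(z)+zp'(z)\phi[p(z)]\prec q(z)$, Lemma~\ref{miller1} yields $p(z)\prec q(z)=\sqrt{1+z}$.

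The computations are routine; the only point requiring a moment's attention is the last inequality, namely confirming that the factor $\tfrac32$ (coming from the exponent $(k+1)/2$ with $k=2$) is still small enough to keep $\RE(zQ'(z)/Q(z))$ positive — and indeed it is, with room to spare ($\geq 1/4$). As in the two previous lemmas, no restriction on $\beta$ beyond $\beta>0$ is needed.
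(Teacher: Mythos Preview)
Your proof is correct and follows exactly the paper's route: take $q(z)=\sqrt{1+z}$ (convex), $\phi(w)=\beta/w^2$, check $\RE\phi[q(z)]=\beta\,\RE\bigl(1/(1+z)\bigr)>\beta/2>0$, compute $Q(z)=\beta z/\bigl(2(1+z)^{3/2}\bigr)$ and verify $\RE\bigl(zQ'(z)/Q(z)\bigr)=1-\tfrac{3}{2}\RE\bigl(z/(1+z)\bigr)\geq 1/4>0$, then apply Lemma~\ref{miller1}. One small slip: the claim $q(z)+zq'(z)\phi[q(z)]=q(z)$ is false (the second summand is the nonzero $Q(z)$), but it is also unnecessary---Lemma~\ref{miller1} asks only for $p(z)+zp'(z)\phi[p(z)]\prec q(z)$, which is exactly the stated hypothesis.
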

\begin{proof} Let $q$ be given by
$q(z)=\sqrt{1+z}$ with $q(0)=1$. Then $q$ is a convex function.
Let us define $\phi(w)=\beta/w^2$ and  \[ \RE \phi[q(z)]=\RE \left(\frac{\beta}{1+z}\right)>\frac{\beta}{2}>0.\]
Consider the function $Q$ defined by
$$Q(z):= \beta\frac{zq'(z)}{q^2(z)}=\beta\frac{z}{2(1+z)^{\frac{3}{2}}}.$$ Further
\begin{eqnarray*}
  \RE\left(\frac{zQ'(z)}{Q(z)}\right) &=& 1-\frac{3}{2}\RE\left(\frac{z}{1+z}\right) \\
   &\geq& \frac{1}{4}>0.
\end{eqnarray*} Thus the function $Q$ is starlike and the result now follows by an application of Lemma~\ref{miller1}.
\end{proof}
In the next result condition on $\beta$ is obtained such that $p(z)+\beta z p'(z)/p(z)\prec \sqrt{1+z}$ implies that $p(z)\prec (1+Az)/(1+Bz).$
\begin{lem} Let $-1\leq B<A\leq 1, (A-B)\beta\geq \sqrt{2}(1+|A|)(1+|B|)+|A|^2-1$ and
 $$\frac{1}{\beta}\geq\max\set{0,\frac{A-B}{(1+|A|)(1+|B|)}-\frac{1-|B|}{1+|B|}}.$$ Let $p$ be an analytic function defined on $\mathbb{D}$ with $p(0)=1$ satisfies
$$p(z)+\beta \frac{zp'(z)}{p(z)}\prec \sqrt{1+z}.$$ Then $p(z)\prec \frac{1+Az}{1+Bz}.$
\end{lem}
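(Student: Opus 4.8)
The plan is to follow the same template used in Lemmas~\ref{lm5} through the preceding results, but now invoking Lemma~\ref{miller2} rather than Lemma~\ref{miller1}, because the expression $p(z)+\beta zp'(z)/p(z)$ is of the form $\nu(p(z))+zp'(z)\varphi(p(z))$ with $\nu(w)=w$ and $\varphi(w)=\beta/w$, and we want to dominate it by a \emph{different} function $\sqrt{1+z}$ rather than by the natural right-hand side built from $q$. So first I would set $q(z)=(1+Az)/(1+Bz)$, compute $Q(z)=\beta zq'(z)/q(z)=\beta(A-B)z/((1+Az)(1+Bz))$ and $h(z)=\nu(q(z))+Q(z)=q(z)+Q(z)$, and verify the two hypotheses of Lemma~\ref{miller2}: that $Q$ is starlike univalent (this is the same computation as in the earlier lemma, $\RE(zQ'(z)/Q(z))=\RE((1-ABz^2)/((1+Az)(1+Bz)))>0$, which holds for $-1\le B<A\le 1$), and that $\RE(zh'(z)/Q(z))>0$ on $\mathbb{D}$. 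The second condition is where the hypothesis $1/\beta\ge\max\{0,(A-B)/((1+|A|)(1+|B|))-(1-|B|)/(1+|B|)\}$ will come into play: writing $zh'(z)/Q(z)=zq'(z)/Q(z)+zQ'(z)/Q(z)=q(z)/\beta+zQ'(z)/Q(z)$, one needs the real part of $q(z)/\beta$ plus the real part of the starlikeness quotient of $Q$ to stay positive, and a standard minimum-principle estimate on the boundary reduces this to the stated inequality on $1/\beta$.

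Once Lemma~\ref{miller2} applies, its conclusion is that
\[
p(z)+\beta\frac{zp'(z)}{p(z)}\prec q(z)+\beta\frac{zq'(z)}{q(z)}=:h(z)
\]
implies $p\prec q$. So the remaining task is exactly parallel to the earlier lemmas: show that the hypothesis $p(z)+\beta zp'(z)/p(z)\prec\sqrt{1+z}$ implies $\sqrt{1+z}\prec h(z)$, i.e.\ that $\sqrt{1+z}$ is subordinate to $h$, so that subordination chains through. Setting $\Phi(z)=\sqrt{1+z}$ with $\Phi^{-1}(w)=w^2-1$, the subordination $\Phi\prec h$ is equivalent to $z\prec\Phi^{-1}(h(z))$, and since $\Phi^{-1}\circ h$ fixes $0$ it suffices to show $|\Phi^{-1}(h(e^{it}))|\ge 1$ for all $t\in[-\pi,\pi]$. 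Unwinding, $|\Phi^{-1}(h(e^{it}))|\ge 1$ follows from $|h(e^{it})|\ge\sqrt{2}$ (using $|h^2-1|\ge |h|^2-1\ge 1$), and then
\[
|h(e^{it})|=\left|\frac{1+Ae^{it}}{1+Be^{it}}+\frac{\beta(A-B)e^{it}}{(1+Ae^{it})(1+Be^{it})}\right|\ge\RE\left(\cdots\right)\ge \sqrt{2}
\]
will reduce, after multiplying through and taking real parts with $\cos t\ge -1$, to the first hypothesis $(A-B)\beta\ge\sqrt{2}(1+|A|)(1+|B|)+|A|^2-1$.

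The main obstacle I anticipate is verifying hypothesis~(2) of Lemma~\ref{miller2}, namely $\RE(zh'(z)/Q(z))>0$: unlike the pure-subordination setups handled by Lemma~\ref{miller}, here both $\nu$ and $\varphi$ are nontrivial, so one must carefully combine $\RE(q(z)/\beta)$ (which can be as small as $\RE((1-|A|)/(1+|B|))$-type quantities, and is why the case $1/\beta\ge 0$, i.e.\ $\beta>0$, is needed) with the already-established $\RE(zQ'(z)/Q(z))>0$, and the worst case occurs on $\partial\mathbb{D}$ where a minimum-principle / boundary-estimate argument pins down precisely the constant $(A-B)/((1+|A|)(1+|B|))-(1-|B|)/(1+|B|)$. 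The boundary estimate $|h(e^{it})|\ge\sqrt{2}$ is routine by comparison with the earlier lemmas, so the real bookkeeping is in showing the two $\beta$-conditions are exactly what makes both the Lemma~\ref{miller2} hypotheses and the subordination $\sqrt{1+z}\prec h$ hold simultaneously.
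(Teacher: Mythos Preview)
Your proposal is correct and follows essentially the same route as the paper: apply Lemma~\ref{miller2} with $\nu(w)=w$, $\varphi(w)=\beta/w$, $q(z)=(1+Az)/(1+Bz)$, verify that $Q(z)=\beta zq'(z)/q(z)$ is starlike and that $\RE\bigl(zh'(z)/Q(z)\bigr)>0$ (this is where the $1/\beta$ hypothesis enters), and then establish $\sqrt{1+z}\prec h$ via the boundary estimate $|h(e^{it})|\ge\sqrt{2}$, which is exactly the first $\beta$-condition. Your decomposition $zh'(z)/Q(z)=q(z)/\beta+zQ'(z)/Q(z)$ is in fact the correct one; the paper records $1/\beta$ in place of $q(z)/\beta$, but the structure of the argument is identical.
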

\begin{proof} Define the function $q:\mathbb{D}\rightarrow \mathbb{C}$ by $q(z)=(1+Az)/(1+Bz),\; -1\leq B<A\leq 1$. Consider the subordination $$p(z)+\beta \frac{zp'(z)}{p(z)}\prec q(z)+\beta \frac{zq'(z)}{q(z)}.$$ Thus, in view of Lemma \ref{miller2}, the above subordination can be written as (\ref{e1}) by defining the functions $\nu$ and $\varphi$ as $\nu(w):=w$ and $\varphi(w):=\beta/w\; (\beta\neq0).$ Clearly the functions $\nu$ and $\varphi$ are analytic in $\mathbb{C}$ and $\varphi(w)\neq0.$
Let the functions $Q(z)$ and $h(z)$ be defined by $$Q(z):=zq'(z)\varphi(q(z))=\beta \frac{zq'(z)}{q(z)}$$ and
$$h(z):=\nu(q(z))+Q(z)=q(z)+\beta \frac{zq'(z)}{q(z)}.$$
A computation shows that $Q(z)$ is starlike univalent in $\mathbb{D}$. Further
$$\frac{zh'(z)}{Q(z)}=\frac{1}{\beta}+1+\frac{zq''(z)}{q'(z)}-\frac{zq'(z)}{q(z)}.$$
Let $z=e^{it},\; -\pi\leq t\leq \pi$. Then
\begin{eqnarray*}
  \RE\left(\frac{e^{it}h'(e^{it})}{Q(e^{it})}\right) &=& \frac{1}{\beta}+\RE\left(\frac{1-Be^{it}}{1+Be^{it}}-\frac{(A-B)e^{it}}{(1+Ae^{it})(1+Be^{it})}\right)\\
   &=& \frac{1}{\beta}+\frac{1-|B|}{1+|B|}-\frac{A-B}{(1+|A|)(1+|B|)}>0.
\end{eqnarray*}
Thus by Lemma~\ref{miller2}, it follows that $p(z)\prec q(z).$ In order to prove our result, we need to prove that
  $$\Phi(z):=\sqrt{1+z}\prec q(z)+\beta \frac{zq'(z)}{q(z)}=\frac {1+Az}{1+Bz}+\frac{\beta(A-B)z}{(1+Az)(1+Bz)}:=h(z)$$ The subordination $\Phi(z)\prec h(z)$ is equivalent to the subordination  $z\prec \Phi^{-1}(h(z)).$  Now in order to prove result it is enough to show $|\Phi^{-1}(h(e^{it}))|\geq1,\; -\pi\leq t\leq \pi.$ Now
  $$\left|\Phi^{-1}(h(e^{it}))\right|=\left|\left(\frac{1+Ae^{it}}{1+Be^{it}}+\frac{\beta(A-B)e^{it}}
  {(1+Ae^{it})(1+Be^{it})}\right)^2-1\right|\geq1$$ implies
  $$ \left|\frac{1+Ae^{it}}{1+Be^{it}}+\frac{\beta(A-B)e^{it}}
  {(1+Ae^{it})(1+Be^{it})}\right|\geq\sqrt{2}.$$ Further
  \begin{eqnarray*}
    \left|\frac{1+Ae^{it}}{1+Be^{it}}+\frac{\beta(A-B)e^{it}}
  {(1+Ae^{it})(1+Be^{it})}\right| &\geq& \RE\left(\frac{1+Ae^{it}}{1+Be^{it}}+\frac{\beta(A-B)e^{it}}{(1+Ae^{it})(1+Be^{it})}\right) \\
   &\geq& \frac{1-|A|}{1+|B|}+\frac{(A-B)\beta}{(1+|A|)(1+|B|)}\geq\sqrt{2},\end{eqnarray*}
  for $(A-B)\beta\geq\sqrt{2}(1+|A|)(1+|B|)+|A|^2-1$. This completes the proof.
\end{proof}
 \section{Sufficient condition for Janowski Starlikeness}\label{3}
 The following first two results [Lemma~\ref{t}, \ref{t1}] are essentially due to Ali et al. \cite[Lemma 2.1, 2.10]{ali}. However,
an alternate proof of the same is presented below which is much easier than that of given by Ali et al. \cite{ali}:
\begin{lem}\label{t} Assume that $-1\leq B< A\leq1$, $-1\leq E<D\leq1$ and $\beta(A-B)\geq(D-E)(1+B^2)+|2B(D-E)-E\beta(A-B)|$.
Let $p$ be an analytic function defined on $\mathbb{D}$ with $p(0)=1$ satisfies
$$1+\beta zp'(z)\prec \frac{1+Dz}{1+Ez},\;\; \beta\neq0.$$ Then $p(z)\prec \frac{1+Az}{1+Bz}.$
\end{lem}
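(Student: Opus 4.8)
The plan is to follow the two-step template used throughout Section~\ref{2}, most closely mirroring the proof of Lemma~\ref{lm5}. First I would put $q(z)=(1+Az)/(1+Bz)$ and $\varphi(w)\equiv\beta$, so that $Q(z):=\beta zq'(z)=\beta(A-B)z/(1+Bz)^2$; a short computation gives $zQ'(z)/Q(z)=(1-Bz)/(1+Bz)$, whose real part at $z=re^{it}$ equals $(1-B^2r^2)/|1+Bre^{it}|^2>0$ since $|B|\le1$, so $Q$ is starlike univalent in $\mathbb{D}$. By Lemma~\ref{miller}, the subordination $1+\beta zp'(z)\prec1+\beta zq'(z)$ would already force $p\prec q$; since we are only given $1+\beta zp'(z)\prec(1+Dz)/(1+Ez)$, transitivity of subordination reduces everything to proving
\[
  \Phi(z):=\frac{1+Dz}{1+Ez}\prec 1+\beta zq'(z)=1+\frac{\beta(A-B)z}{(1+Bz)^2}=:h(z).
\]

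For this last subordination I would write $\Phi^{-1}(w)=(w-1)/(D-Ew)$, so that $\Phi(z)\prec h(z)$ is equivalent to $z\prec\Phi^{-1}(h(z))$; as this map fixes the origin, it suffices to verify $|\Phi^{-1}(h(e^{it}))|\ge1$ for all $t\in[-\pi,\pi]$. A direct computation gives
\[
  \Phi^{-1}(h(e^{it}))=\frac{\beta(A-B)e^{it}}{(D-E)(1+Be^{it})^2-E\beta(A-B)e^{it}},
\]
whose numerator has modulus $\beta(A-B)$; note $\beta>0$ here, since the hypothesis forces $\beta(A-B)\ge(D-E)(1+B^2)>0$ while $A-B>0$. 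Expanding $(1+Be^{it})^2=1+2Be^{it}+B^2e^{2it}$ and factoring $e^{it}$ out of the denominator leaves $|(D-E)(e^{-it}+B^2e^{it})+(2B(D-E)-E\beta(A-B))|$, which the triangle inequality bounds by $(D-E)|1+B^2e^{2it}|+|2B(D-E)-E\beta(A-B)|\le(D-E)(1+B^2)+|2B(D-E)-E\beta(A-B)|$. Hence
\[
  |\Phi^{-1}(h(e^{it}))|\ge\frac{\beta(A-B)}{(D-E)(1+B^2)+|2B(D-E)-E\beta(A-B)|}\ge1,
\]
the final inequality being exactly the stated condition on $\beta$. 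Combining this with the first step yields $p\prec q$.

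The hard part will be arranging the denominator estimate correctly: the decomposition of $(D-E)(1+Be^{it})^2-E\beta(A-B)e^{it}$ must be set up (by pulling out the factor $e^{it}$) so that the two terms carrying $D-E$ pair up as $(D-E)(e^{-it}+B^2e^{it})$, whose modulus is maximized at $t=0$ with value $(D-E)(1+B^2)$; a careless grouping of the three terms $(D-E)$, $[2B(D-E)-E\beta(A-B)]e^{it}$, $(D-E)B^2e^{2it}$ either loses sharpness or fails to reproduce the bound appearing in the hypothesis. Everything else — the starlikeness of $Q$, the reduction to a boundary-modulus inequality, and checking that the extremum falls at $t=0$ (so that the boundary values at $t=\pm\pi$ need not be separately inspected) — is routine and parallels Lemma~\ref{lm5}.
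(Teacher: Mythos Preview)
Your proposal is correct and follows essentially the same route as the paper: set $q(z)=(1+Az)/(1+Bz)$, verify that $Q(z)=\beta zq'(z)$ is starlike, reduce via Lemma~\ref{miller} to showing $(1+Dz)/(1+Ez)\prec h(z)=1+\beta(A-B)z/(1+Bz)^2$, and then check $|\Phi^{-1}(h(e^{it}))|\ge1$ by bounding the denominator $(D-E)(1+B^2e^{2it})+[2B(D-E)-E\beta(A-B)]e^{it}$ with the triangle inequality. The paper presents exactly this computation (writing the denominator directly as $(D-E)(1+B^2z^2)+(2B(D-E)-\beta E(A-B))z$), and your observation that the hypothesis forces $\beta>0$ is a small clarification the paper glosses over by writing $|\beta|$.
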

\begin{proof} Define the function $q:\mathbb{D}\rightarrow \mathbb{C}$ by  $$q(z)= \frac{1+Az}{1+Bz},\;\;-1\leq B< A\leq1.$$ Then $q$ is convex in $\mathbb{D}$ with $q(0)=1$. Further computation shows that
$$Q(z)=\beta zq'(z)=\frac{\beta(A-B)z}{(1+Bz)^2}$$ and $Q$ is starlike in $\mathbb{D}.$  It follows from Lemma \ref{miller}, that the subordination  $$1+\beta zp'(z)\prec1+\beta zq'(z)$$ implies $p(z)\prec q(z).$ In view of the above result it is sufficient to prove
  $$\frac{1+Dz}{1+Ez}\prec 1+\beta zq'(z)=1+ \beta\frac{(A-B)z}{(1+Bz)^2}=h(z).$$
 Let $w=\Phi(z)=\frac{1+Dz}{1+Ez}.$ Then $\Phi^{-1}(w)=\frac{w-1}{D-Ew}$ and
 \begin{eqnarray*}
   \Phi^{-1}(h(z)) &=& \frac{\beta(A-B)z}{D(1+Bz)^2-E(1+Bz)^2-\beta E(A-B)z} \\
    &=& \frac{\beta(A-B)z}{(D-E)(1+B^2z^2)+(2B(D-E)-\beta E(A-B))z}.
 \end{eqnarray*} Let $z=e^{it},\; \pi\leq t\leq \pi$. Thus
 $$|\Phi^{-1}(h(e^{it}))|\geq \frac{|\beta|(A-B)}{(D-E)(1+B^2)+|(2B(D-E)-\beta E(A-B))|}\geq 1,$$ for
 $|\beta|(A-B)\geq (D-E)(1+B^2)+|(2B(D-E)-E\beta(A-B))|.$  Hence $q(\mathbb{D})\subset h(\mathbb{D})$ that is $q(z)\prec h(z)$ this completes the proof.
\end{proof}

It should be noted that Ali et al.\cite{ali} made the assumption $AB>0$ in order to prove the result \cite[Lemma 2.10]{ali}, whereas in the following lemma this condition has been dropped:

\begin{lem}\label{t1} Assume that $-1\leq B< A\leq1$, $-1\leq E<D\leq1$ and $\beta(A-B)\geq(D-E)(1+|AB|)+|(A+B)(D-E)-E\beta(A-B)|$.
Let $p$ be an analytic function defined on $\mathbb{D}$ with $p(0)=1$ satisfies
$$1+\beta \frac{zp'(z)}{p(z)}\prec \frac{1+Dz}{1+Ez},\;\; \beta\neq0.$$ Then $p(z)\prec \frac{1+Az}{1+Bz}.$
\end{lem}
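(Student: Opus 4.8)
The plan is to run exactly the scheme used in Lemma~\ref{t}, but with the expression $\beta zq'(z)$ replaced by $\beta zq'(z)/q(z)$, so that the relevant auxiliary function is the one already analysed in the section on the lemniscate. First I would set $q(z)=(1+Az)/(1+Bz)$, which is convex in $\mathbb{D}$ with $q(0)=1$, and form
\[
Q(z):=\beta\,\frac{zq'(z)}{q(z)}=\frac{\beta(A-B)z}{(1+Az)(1+Bz)},\qquad
\frac{zQ'(z)}{Q(z)}=\frac{1-ABz^2}{(1+Az)(1+Bz)}.
\]
Writing $z=re^{it}$, one has $\RE\bigl(zQ'(z)/Q(z)\bigr)>0$ in $\mathbb{D}$ by the same elementary estimate used earlier for this $Q$ (namely $1+ABr^2+(A+B)r\cos t\ge(1-|A|r)(1-|B|r)>0$ when $A+B\ge0$, and $\ge(1+|A|r)(1+|B|r)>0$ when $A+B\le0$), so $Q$ is starlike univalent. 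Hence Lemma~\ref{miller} applies to $\varphi(w)=\beta/w$ and gives that the subordination $1+\beta zp'(z)/p(z)\prec 1+\beta zq'(z)/q(z)$ already forces $p\prec q$. Thus it remains only to show that the subordinant is correctly placed, i.e.
\[
\frac{1+Dz}{1+Ez}\prec h(z):=1+\beta\,\frac{zq'(z)}{q(z)}=1+\frac{\beta(A-B)z}{(1+Az)(1+Bz)}.
\]

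For this I would invert the Möbius map $\Phi(z)=(1+Dz)/(1+Ez)$, so $\Phi^{-1}(w)=(w-1)/(D-Ew)$, and use that $\Phi\prec h$ is equivalent to $z\prec\Phi^{-1}(h(z))$. Clearing denominators with $(1+Az)(1+Bz)=1+(A+B)z+ABz^2$ gives
\[
\Phi^{-1}(h(z))=\frac{\beta(A-B)z}{(D-E)(1+ABz^2)+\bigl((A+B)(D-E)-E\beta(A-B)\bigr)z},
\]
an analytic function on $\mathbb{D}$ vanishing at $0$, so the subordination reduces to the boundary estimate $\abs{\Phi^{-1}(h(e^{it}))}\ge1$ for all $t$. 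Since $A-B>0$ and the right side of the hypothesis is strictly positive, the hypothesis forces $\beta>0$; then on $\abs{z}=1$ the triangle inequality (using $D-E>0$ and $\abs{1+ABz^2}\le1+\abs{AB}$) bounds the modulus of the denominator above by $(D-E)(1+\abs{AB})+\abs{(A+B)(D-E)-E\beta(A-B)}$, whence
\[
\abs{\Phi^{-1}(h(e^{it}))}\ge\frac{\beta(A-B)}{(D-E)(1+\abs{AB})+\abs{(A+B)(D-E)-E\beta(A-B)}}\ge1
\]
precisely under the stated condition on $\beta$. This gives $\Phi\prec h$, and the proof is complete.

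The step that needs the most care — and the one that lets us drop the hypothesis $AB>0$ imposed by Ali et al. — is the denominator estimate: one must split $(D-E)(1+ABz^2)+(\cdots)z$ and estimate $\abs{1+ABz^2}\le1+\abs{AB}$ on the unit circle (which needs no sign restriction on $AB$) while keeping the remaining linear coefficient inside a single absolute value $\abs{(A+B)(D-E)-E\beta(A-B)}$ rather than splitting it further. A minor bookkeeping point to record is that $h(\mathbb{D})$ avoids the pole $w=-1/E$ of $\Phi^{-1}$, so that $\Phi^{-1}\circ h$ is genuinely analytic on $\mathbb{D}$; this is automatic once the above boundary bound holds. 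Everything else is the routine computation already displayed in Lemma~\ref{t} and its companions.
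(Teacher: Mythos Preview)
Your proof is correct and follows essentially the same route as the paper's: set $q(z)=(1+Az)/(1+Bz)$, verify that $Q(z)=\beta zq'(z)/q(z)$ is starlike, invoke Lemma~\ref{miller}, and then establish $(1+Dz)/(1+Ez)\prec h(z)$ by inverting the M\"obius map and bounding $\abs{\Phi^{-1}(h(e^{it}))}$ from below via the triangle inequality on the denominator $(D-E)(1+ABz^2)+\bigl((A+B)(D-E)-E\beta(A-B)\bigr)z$. Your version is in fact more explicit than the paper's (you spell out the starlikeness argument, note that the hypothesis forces $\beta>0$, and flag the analyticity of $\Phi^{-1}\circ h$), but the argument is the same.
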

\begin{proof}  As above define the function $q:\mathbb{D}\rightarrow \mathbb{C}$ by  $$q(z)= \frac{1+Az}{1+Bz},\;\;-1\leq B< A\leq1.$$ Then $q$ is convex in $\mathbb{D}$ with $q(0)=1$. A computation shows that $$Q(z)=\frac{\beta zq'(z)}{q(z)}=\frac{\beta(A-B)z}{(1+Az)(1+Bz)}$$ and $Q$ is starlike in $\mathbb{D}.$  It follows from Lemma \ref{miller}, that the subordination  $$1+\beta \frac{zp'(z)}{p(z)}\prec1+\beta \frac{zq'(z)}{q(z)}$$ implies $p(z)\prec q(z).$ Now we need to prove
  $$\frac{1+Dz}{1+Ez}\prec 1+\beta \frac{zq'(z)}{q(z)}=1+ \beta\frac{(A-B)z}{(1+Bz)^2}=h(z).$$
 Let $w=\Phi(z)=\frac{1+Dz}{1+Ez}.$ Then $\Phi^{-1}(w)=\frac{w-1}{D-Ew}$ and
 \begin{eqnarray*}
   \Phi^{-1}(h(z)) &=& \frac{\beta(A-B)z}{(D-E)(1+Az)(1+Bz)-\beta E(A-B)z} \\
    &=& \frac{\beta(A-B)z}{(D-E)(1+ABz^2)+((A+B)(D-E)-\beta E(A-B))z}.
 \end{eqnarray*} Let $z=e^{it},\; \pi\leq t\leq \pi$. Thus
 $$|\Phi^{-1}(h(e^{it}))|\geq \frac{|\beta|(A-B)}{(D-E)(1+|AB|)+|(A+B)(D-E)-\beta E(A-B)|}\geq 1,$$ for
 $|\beta|(A-B)\geq (D-E)(1+|AB|)+|(A+B)(D-E)-E\beta(A-B)|.$  Hence $q(\mathbb{D})\subset h(\mathbb{D})$ that is $q(z)\prec h(z)$ this completes the proof.
\end{proof}
\begin{lem}\label{th2} Assume that $-1\leq B< A\leq1$, $-1\leq E<D\leq1$ and $|\beta|(A-B)\geq(D-E)(1+A^2)+|2A(D-E)-E\beta(A-B)|$.
Let $p$ be an analytic function defined on $\mathbb{D}$ with $p(0)=1$ satisfies
$$1+\beta \frac{zp'(z)}{p^2(z)}\prec \frac{1+Dz}{1+Ez}.$$ Then $p(z)\prec \frac{1+Az}{1+Bz}.$
\end{lem}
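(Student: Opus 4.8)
The plan is to reprise the argument of Lemma~\ref{t} with the square $p^{2}$ (resp.\ $q^{2}$) in the denominator, so that the factor $(1+Bz)^{2}$ occurring there is replaced by $(1+Az)^{2}$. First I would take $q(z)=(1+Az)/(1+Bz)$, which is convex (hence univalent) in $\mathbb{D}$ with $q(0)=1$, and record, using $q'(z)=(A-B)/(1+Bz)^{2}$ and $q^{2}(z)=(1+Az)^{2}/(1+Bz)^{2}$, that
\[
Q(z):=\beta\,\frac{zq'(z)}{q^{2}(z)}=\frac{\beta(A-B)z}{(1+Az)^{2}}.
\]
To check that $Q$ is starlike I would compute $zQ'(z)/Q(z)=(1-Az)/(1+Az)$ and observe that for $z=re^{it}$ with $0<r<1$ its real part equals $(1-A^{2}r^{2})/|1+Are^{it}|^{2}>0$, since $|A|\le1$. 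It then follows from Lemma~\ref{miller} that the subordination $1+\beta zp'(z)/p^{2}(z)\prec 1+\beta zq'(z)/q^{2}(z)$ implies $p\prec q$; in view of the hypothesis, it is therefore sufficient to prove
\[
\frac{1+Dz}{1+Ez}\prec 1+\frac{\beta(A-B)z}{(1+Az)^{2}}=:h(z).
\]

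For this last subordination I would argue exactly as in Lemma~\ref{t}. Put $\Phi(z)=(1+Dz)/(1+Ez)$, so that $\Phi^{-1}(w)=(w-1)/(D-Ew)$; then $\Phi\prec h$ is equivalent to $z\prec\Phi^{-1}(h(z))$, and hence it is enough to show $|\Phi^{-1}(h(e^{it}))|\ge1$ for all real $t$. A straightforward simplification gives
\[
\Phi^{-1}(h(z))=\frac{\beta(A-B)z}{(D-E)(1+Az)^{2}-E\beta(A-B)z}
=\frac{\beta(A-B)z}{(D-E)(1+A^{2}z^{2})+\bigl(2A(D-E)-E\beta(A-B)\bigr)z}.
\]
Setting $z=e^{it}$ and bounding the denominator by the triangle inequality — using $D-E>0$ and $|1+A^{2}e^{2it}|\le1+A^{2}$ — while recalling $A-B>0$, one obtains
\[
\bigl|\Phi^{-1}(h(e^{it}))\bigr|\ge\frac{|\beta|(A-B)}{(D-E)(1+A^{2})+|2A(D-E)-E\beta(A-B)|},
\]
which is at least $1$ precisely under the stated hypothesis $|\beta|(A-B)\ge(D-E)(1+A^{2})+|2A(D-E)-E\beta(A-B)|$. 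Hence $\Phi(\mathbb{D})\subset h(\mathbb{D})$, i.e.\ $\Phi\prec h$, and the proof is complete.

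Essentially every step is a mechanical transcription of the $k=0$ and $k=1$ cases treated in Lemmas~\ref{t} and~\ref{t1}, so I do not expect a conceptual obstacle. The only points demanding a little care are the algebraic identity $(D-E)(1+Az)^{2}-E\beta(A-B)z=(D-E)(1+A^{2}z^{2})+(2A(D-E)-E\beta(A-B))z$ and the subsequent triangle-inequality estimate on $|z|=1$, together with the tacit checks — implicit already in the earlier lemmas — that $h$ omits the value $D/E$, so that $\Phi^{-1}\circ h$ is a genuine analytic self-map of $\mathbb{D}$ fixing the origin, and that the degenerate values $|A|=1$, where $h$ has a boundary pole, create no difficulty.
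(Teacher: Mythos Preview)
Your proof is correct and follows exactly the same approach as the paper's own argument: set $q(z)=(1+Az)/(1+Bz)$, verify that $Q(z)=\beta zq'(z)/q^{2}(z)=\beta(A-B)z/(1+Az)^{2}$ is starlike via $zQ'(z)/Q(z)=(1-Az)/(1+Az)$, invoke Lemma~\ref{miller}, and then establish $(1+Dz)/(1+Ez)\prec h(z)$ by the $\Phi^{-1}$ trick. In fact you have written out in full the closing estimate that the paper merely defers to Lemma~\ref{t} with the words ``similar \ldots\ and therefore it is skipped here.''
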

\begin{proof} Define the function $q:\mathbb{D}\rightarrow \mathbb{C}$ by  $$q(z)= \frac{1+Az}{1+Bz},\;\;-1\leq B< A\leq1.$$ Then $q$ is convex in $\mathbb{D}$ with $q(0)=1$. A computation shows that
$$Q(z)=\frac{\beta zq'(z)}{q^2(z)}=\frac{\beta(A-B)z}{(1+Az)^2}$$ and $$\frac{zQ'(z)}{Q(z)}=\frac{1-Az}{1+Az}.$$
 As before, a computation shows $Q$ is starlike in $\mathbb{D}.$  It follows from Lemma \ref{miller}, that the subordination  $$1+\beta \frac{zp'(z)}{p^2(z)}\prec1+\beta \frac{zq'(z)}{q^2(z)}$$ implies $p(z)\prec q(z).$ To prove result, it is enough to show that
  $$\frac{1+Dz}{1+Ez}\prec 1+\beta \frac{zq'(z)}{q^2(z)}=1+ \beta\frac{(A-B)z}{(1+Az)^2}=h(z).$$
Remaining part of the proof is similar to that of Lemma~\ref{t} and therefore it is skipped here.
\end{proof}
\begin{rem}
 When $\beta=1$, Lemma~\ref{th2} reduces to \cite[Lemma 2.6]{ali} due to Ali et al.
\end{rem}
\bibliographystyle{amsplain}

{\bf Acknowledgments.}  The research work is supported by a grant from University of Delhi and  also by the Basic Science Research Program through the National Research Foundation of Korea(NRF) funded by the Ministry of Education, Science and Technology (No. 2012-0002619).

\end{document}